\title{A numerical algorithm for attaining the {C}hebyshev bound in optimal learning}
\author[P. Paruchuri and D. Chatterjee]{Pradyumna Paruchuri\,\orcidlink{0000-0002-8598-5069} and Debasish Chatterjee\,\orcidlink{0000-0002-1718-653X}}
\address{%
\textsf{Systems \& Control Engineering\\ IIT Bombay, Powai\\ Mumbai 400076, India}
\\\ \\
\textsf{urls:}
\begin{minipage}{0.55\textwidth}
	\url{https://www.sc.iitb.ac.in/~pradyumn}\\
	\url{https://www.sc.iitb.ac.in/~chatterjee}
\end{minipage}
\\\ \\
\textsf{emails:} \texttt{pradyumn@sc.iitb.ac.in, dchatter@iitb.ac.in}
}
\date{\DTMnow}
\thanks{The first author was supported by the PMRF grant \textsf{RSPMRF0262} from the Government of India. The authors are in the process of submitting a patent application based on the results reported herein.}
\begin{document}

\begin{abstract}
    Given a compact subset of a Banach space, the Chebyshev center problem consists of finding a minimal circumscribing ball containing the set. In this article we establish a numerically tractable algorithm for solving the Chebyshev center problem in the context of optimal learning from a finite set of data points. For a hypothesis space realized as a compact but not necessarily convex subset of a finite-dimensional subspace of some underlying Banach space, this algorithm computes the Chebyshev radius and the Chebyshev center of the hypothesis space, thereby solving the problem of optimal recovery of functions from data. The algorithm itself is based on, and significantly extends, recent results for near-optimal solutions of convex semi-infinite problems by means of targeted sampling, and it is of independent interest. Several examples of numerical computations of Chebyshev centers are included in order to illustrate the effectiveness of the algorithm.
\end{abstract}

\keywords{optimal learning, optimal interpolation, Chebyshev center problem, convex semi-infinite programs}

\maketitle

\section{Introduction}
\label{sec:intro}

Learning \`a la approximation theory dates back at least to \cite{ref:CucSma-02}, and today it occupies the centerstage of the vibrant area of machine learning. The central idea herein is to leverage quantitative estimates germane to the field of approximation theory in the context of \emph{function learning} from (possibly) finitely many input/output data points. This function learning is realized in the form of the selection of a function from a reasonable model class (also called hypothesis space) dictated by the physics of the problem or an educated guess, that not only (nearly) justifies the data points in a certain precise sense, but is also capable of generalization beyond the given data set. Naturally, the procedure for the \emph{selection} of such a function is of central importance in terms of both applicability and numerical tractability.

Preceding works \cite{ref:MicRiv-77, ref:DeVPetWoj-17, ref:BinBonDeVPet-22} from the closely allied areas of interpolation theory and approximation theory proposed the framework of \emph{optimal recovery} in the context of function learning, wherein the aforementioned selection problem is posed in terms of furnishing a minimizer of the worst case error incurred by such a selection in the hypothesized model class. Mathematically, the preceding desideratum translates to the so-called Chebyshev center problem \cite[Chapter 15]{ref:AliTsa-21}, namely the best approximation of a set by a singleton. Let us briefly recall that in a Banach space \((\Banach, \norm{\cdot})\), a \embf{Chebyshev center} of a closed and bounded subset \(\relSet \subset \Banach\) is defined as the center of a ball of smallest radius circumscribing \(\relSet\). To wit, a Chebyshev center of \(\relSet\) is an optimizer of the variational problem: 
\begin{equation}
\label{eq:Chebyshev center problem}
\begin{aligned}
    & \minimize \limits_{f \in \Banach} && \sup \limits_{g \in \relSet}\  \norm{f - g}.
\end{aligned}
\end{equation}
The optimal value of \eqref{eq:Chebyshev center problem} is the \embf{Chebyshev radius} \(\chebRadius{\relSet}\) of \(\relSet\). In general, depending on the nature of the norm \(\norm{\cdot}\), a set \(\relSet\) may have multiple Chebyshev centers; the corresponding set is denoted by \(Z(\relSet)\). The cartoon figure given below illustrates a family of Chebyshev centers \(\zeta\) for subsets of \(\R^{2}\) under the Euclidean norm and the \(\ell_{1}\)-norm; observe that in the latter case, the set \(Z(\relSet)\) of Chebyshev centers is not a singleton.
\begin{figure}[h!]
    \centering
		\includegraphics[scale=0.3]{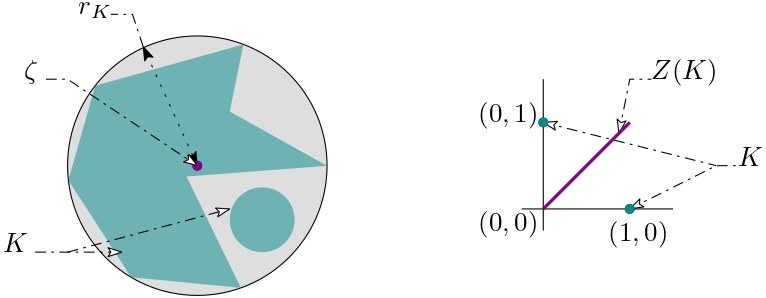}
	\caption{Illustrative examples of the Chebyshev centers and Chebyshev radius of subsets of $\R^{2}$.}
	\label{fig:Chebyshev example}
\end{figure}

In the context of learning theory, the Chebyshev center problem encodes the idea of \emph{optimal learning} in the hypothesized model class: Here \(\Banach\) represents the space of functions in which lie the hypothesis classes, and the set \(\relSet\) represents the subset of the model class of functions that satisfies the given data. The corresponding optimization problem \eqref{eq:Chebyshev center problem} is fraught with stiff numerical challenges:
\begin{itemize}[label=\textbullet, align=left, leftmargin=*]
	\item For each fixed \(f \in \Banach\), the inner maximization over \(g\) in \eqref{eq:Chebyshev center problem} is on a potentially infinite-dimensional subset of the Banach space \(\Banach\), and its solutions are rarely, if ever, expressible parametrically in closed form in \(f\).
	\item The outer minimization over \(f\) in \eqref{eq:Chebyshev center problem} is, in general, also on an infinite-dimensional Banach space \(\Banach\).
\end{itemize}
In either case, \eqref{eq:Chebyshev center problem} is numerically intractable. For reasons of computational tractability, one is, consequently, forced to ``discretize'' the various infinite-dimensional objects in \eqref{eq:Chebyshev center problem} above, and work in a finite dimensional setting;\footnote{\label{fn:finitary}Let us draw attention to the fact, as pointed out in \cite[Section 16.1]{ref:AliTsa-21}, that for computational tractability, it is imperative to restrict attention to finitary objects; consequently, considering finite-dimensional avatars of the various objects in \eqref{eq:Chebyshev center problem} is the best that can be done.} the resulting mathematical optimization is a variant of the so-called \emph{relative Chebyshev center problem}.

A \embf{relative Chebyshev center} of a closed and bounded subset \(\relSet\) with respect to a nonempty \(\Set \subset \Banach\) is given by an optimizer of:
\begin{equation}
\label{eq:relative Chebyshev center}
\begin{aligned}
    & \minimize \limits_{f \in \Set} && \sup \limits_{g \in \relSet}\  \norm{f - g},
\end{aligned}
\end{equation}
where the set \(\Set\) is chosen to be a reasonably fine ``finite'' discretization approximating the Banach space \(\Banach\) and the model class class \(\relSet\) is restricted to a suitable finite dimensional object. Nevertheless, even the resulting simplified problem \eqref{eq:relative Chebyshev center} continues to be numerically challenging:
\begin{enumerate}[label=\textup{(rCC-\arabic*)}, align=left, leftmargin=*, widest=3]
	\item \label{rcc:finite} The simplest version, although very unrealistic, is when \(\Set\) is a subset of \(\R^{n}\) and \(\relSet\) is a finite collection of points in \(\R^{n}\). The time complexity of solving such problems grows exponentially with the dimension \(n\) \cite[Chapter 15, p.\ 362]{ref:AliTsa-21} in general, and this is the current state of the art.
	\item \label{rcc:general} A more realistic setting is that of \(\Set\) being a finite dimensional subspace of \(\Banach\) and \(\relSet\) being compact (although not necessarily convex), and in this setting, the problem \eqref{eq:relative Chebyshev center} is known to be NP-hard \cite[Chapter 15, p.\ 362]{ref:AliTsa-21}. While there exist algorithms that compute the Chebyshev centers of special types of subsets \(\relSet\) of Euclidean spaces, none of them is sufficiently general to admit non-convex \(\relSet\), nor do these algorithms scale reasonably with the dimension of \(\relSet\); we refer the reader to the discussion in \cite[Chapter 15, p.\ 362]{ref:AliTsa-21} for details and references.
\end{enumerate}

\subsection*{Contributions}

\begin{enumerate}[label=\textup{(\Alph*)}, align=left, widest=B, leftmargin=*]
	\item \label{contrib:algorithm} The chief contribution of this article is a computationally tractable algorithm to solve \eqref{eq:relative Chebyshev center}.
		\begin{itemize}[label=\(\circ\), align=left, leftmargin=*]
			\item Specifically, our algorithm finds an approximant \(\approximant \in \Set\) such that \footnote{\label{fn:o-notation}Recall the Landau notation \(\phi(\genVar) = \littleOh(\genVar)\) that stands for a function \(\phi(0) = 0\) and \(\lim_{\genVar \to 0} \frac{\abs{\phi(\genVar)}}{\abs{\genVar}} = 0\).}
				\begin{equation}
					\label{eq:learning rate}
					\norm{g - \approximant} \le \chebRadius{\relSet} + \littleOh(1)\quad\text{for all } g \in \relSet,
				\end{equation}
				where the term \(\littleOh(1)\) on the right-hand side of \eqref{eq:learning rate} captures all the errors due to numerical inaccuracies, algorithmic truncation, etc., at the level of \eqref{eq:relative Chebyshev center}. Cf.\ \cite{ref:BinBonDeVPet-22} devises an algorithm to find an approximant \(\approximant \in \Set\) that satisfies
		        \[
					\text{for some constant }C > 2,\quad \norm{g - \approximant} \le C \, \chebRadius{\relSet} \quad \text{for all } g \in \relSet;
		        \]
				to the best of our knowledge, the preceding bound is currently the best available. In contrast, the algorithm reported in this article attains the \emph{best possible benchmark error bound} in the relative Chebyshev center problem \eqref{eq:relative Chebyshev center} modulo the term \(\littleOh(1)\); to wit, the constant \(C = 1\) appears in our error bound and one can do \emph{no better} than this. We refer to the situation described by \eqref{eq:learning rate} as the attainment of the \embf{Chebyshev bound} in the context of learning theory.
		
			\item Moreover, our algorithm does \emph{not} require \(\relSet\) to be convex although compactness of \(\relSet\) is critical for us. Consequently and for instance, \(\relSet\) being a finite disjoint union of non-convex compact sets is perfectly admissible in our setting, and the Chebyshev bound \eqref{eq:learning rate} continues to hold. Nor do we insist that the underlying norm should be strictly convex.
			\item Furthermore, the memory requirement of our algorithm scales \emph{linearly} with respect to the dimension of \(\relSet\); see also the discussion and references in \ref{rcc:finite} and \ref{rcc:general} above. It is conceivably possible to employ other robust optimization tools (employing, e.g., random sampling techniques) to arrive at solutions to \eqref{eq:relative Chebyshev center}, but to the best of our knowledge, no other numerically algorithm is capable of scaling linearly with respect to the dimension of \(\relSet\).
		\end{itemize}
		Naturally, our algorithm applies to the problem of optimal recovery \`a la \cite{ref:MicRiv-77} of functions from sampled measurements (which is of key relevance in signal processing), and in a sense conclusively answers the quest for a tractable numerical algorithm for optimal recovery.
	\item \label{contrib:csip} In the process of devising the aforementioned algorithm, we solve a more general problem of independent interest. This contribution consists of a numerically tractable algorithmic mechanism to solve a broad class of convex semi-infinite programs that subsumes the relative Chebyshev center problem \eqref{eq:relative Chebyshev center}. The mechanism is an extension of the algorithm recently reported in \cite{ref:DasAraCheCha-22}, making it applicable to a wider class of convex semi-infinite programs (SIPs) and also enabling it to extract optimizers of such convex SIPs. In the context of the problem \eqref{eq:relative Chebyshev center}, these features contribute to the extraction of relative Chebyshev centers of potentially non-convex (but compact) sets despite the absence of strict convexity of the underlying norm, etc.
\end{enumerate}


\subsection*{Content and organization}

In \secref{sec:learning Chebyshev center} we formally relate the problem of learning to an appropriate relative Chebyshev center problem viewed as a convex min-max optimization. The case of learning in the setting of finite-dimensional Banach spaces is treated in detail, along with a specific application to reproducing kernel Hilbert spaces. The reformulation of min-max optimization problems into convex semi-infinite programs and the applicability of our algorithm is discussed at the end of \secref{sec:learning Chebyshev center}, thereby completing the presentation of our contribution \ref{contrib:algorithm}. The technically standalone intervening \secref{sec:MSA algo} details the algorithm to solve convex SIPs and also the process of extracting optimizers of convex SIPs via regularization, which completes the presentation of our contribution \ref{contrib:csip}. Numerical experiments are presented in \secref{sec:simulations} illustrating  our algorithm and the role of regularization in extracting the Chebyshev center(s).

\section{Optimal learning via Chebyshev centers}
\label{sec:learning Chebyshev center}
A typical setting of the learning problem is that we are given a few observations on the behaviour of a function and we are required to estimate/approximate its behaviour elsewhere. The observations/measurements are of various types: point evaluations of the function if it is known to be continuous, output of linear functionals operating on the function, etc. In general, prior knowledge about the nature of admissible functions is encoded into the learning problem by specifying a hypothesis class.

Let \((\Banach, \norm{\cdot})\) be a Banach space and \(\modelClass \subset \Banach\) be a compact subset representing the hypothesis class. Information on the object of interest \(\actFunction\) is given in terms of a nonempty input-output set \(\dataSet\). Let \(\relSet \subset \modelClass\) denote the set of all possible instantiations of \(\actFunction\) that generates the data \(\dataSet\), that is,
\[
	\relSet \Let \set{g \in \modelClass \suchthat g \text{ satisfies the data in } \dataSet}.
\]
The objective of learning is to find an \(\approximant\) that minimizes the error of approximation from all possible sources from the model class \(\modelClass\) of the given data \(\dataSet\), and mathematically, this translates to the \(\min-\max\) problem:
\begin{equation}
	\label{eq:min-max learning}
	\minimize_{f \in \Banach} \quad \sup_{g \in \relSet} \norm{f - g}.
\end{equation}
The solution(s) to \eqref{eq:min-max learning} constitute Chebyshev center(s) of the set \(\relSet\) in \(\Banach\), as mentioned in the introduction. In this article, we are concerned with the premise of noise-free data. The case of noisy data is a more involved problem and will be studied in subsequent articles.

At the level of description in \eqref{eq:min-max learning}, the learning problem is numerically intractable since it involves objects in possibly infinite-dimensional spaces. One needs to discretize/restrict the search space for the approximant \(\approximant\) to a sufficiently large finite-dimensional space, say \(\discSpace\), of dimension \(\discDim\). This restriction converts problem \eqref{eq:min-max learning} of finding a Chebyshev center to that of a relative Chebyshev center problem, namely,
\begin{equation}
	\label{eq:finite search space}
	\minimize_{f \in \discSpace} \sup_{g \in \relSet} \norm{f - g}.
\end{equation}

Although the algorithm presented in \secref{sec:MSA algo} is theoretically capable in solving \eqref{eq:finite search space}, \(\relSet\) being infinite-dimensional, it still remains numerically intractable. In order to arrive at a tractable solution, one needs to restrict attention to a finite-dimensional approximation to the model class. Alternatively, one can choose to restrict attention to those functions in the model class \(\modelClass\) that lie in a fixed finite-dimensional subspace \(\discModel\) of \(\Banach\). The choice of this reduced model class is reliant on various factors, e.g., domain specific knowledge, trade-off between computational capability and tolerance to error, etc., and is up to the designer. The finite discretization of \eqref{eq:min-max learning} is the following:
\begin{equation}
	\label{eq:finite learning}
	\minimize_{f \in \discSpace} \sup_{g \in \relSet \cap \discModel} \norm{f - g}
\end{equation}
In other words, we seek the best approximant \(\approximant\) of the functions in \(\relSet \cap \discModel\). We present an algorithm to solve \eqref{eq:finite learning} exactly (modulo numerical/convergence errors) in \secref{sec:MSA algo}. To give a sneak peek, we solve the \(\max \min \max\) problem:
\begin{equation}
	\label{eq:max-min-max}
	\maximize_{g_{1}, \ldots, g_{\modelDim+1} \in \relSet \cap \discModel} \min_{f \in \discSpace} \max \set{\norm{f - g_{i}} \suchthat i = 1, \ldots, \modelDim+1}
\end{equation}
to extract the solution of the Chebyshev center problem.

In the rest of this section we discuss the avatars of \eqref{eq:finite learning} in the special settings of measurements being driven by linear functionals on the Banach space and a reproducing kernel Hilbert space (RKHS), followed by a detailed treatment of arriving at \eqref{eq:max-min-max}.

\subsection*{Measurements from linear functionals}

Suppose that the measurements/observations in the data set \(\dataSet\) are given in terms of the outputs of finitely many linear functionals from the dual space \(\dualSpace\) of \(\Banach\). That is, given a collection \((\functional)_{i=1}^{\sampleSize} \subset \dualSpace\), \(\actFunction\) is known to satisfy
\begin{equation}
	\label{eq:measurement data}
	\functional(\actFunction) = \out \quad \text{for } i = 1, \ldots, \sampleSize.
\end{equation}
At the level of description in \eqref{eq:measurement data}, \(\actFunction\) can be any candidate in a translated subspace of \(\Banach\). The information on the hypothesis class specialises the search domain to a bounded subset, say the \(\bound\)-radius ball \(\modelClass \Let \Ball{0}{\bound} \subset \Banach\). Specifically, to ensure numerical tractability, the candidate functions need to be parametrised finitely. To that end, we restrict attention to a (sufficiently large) finite dimensional subspace \(\discModel \Let \linspan \set{\evalFunc \in \Banach \suchthat j = 1, \ldots, \modelDim} \subset \Banach\). The search space for the approximant \(\approximant\) must also be made finite dimensional and we let \(\discSpace \Let \linspan \set{\basisFunc \in \Banach \suchthat i = 1, \ldots, \discDim}\). The choices of these discretizations are up to the designer.

Representing functions in \(\discModel\) with their coefficients under the basis \((\evalFunc)_{j=1}^{\modelDim}\), the data set \(\dataSet\) is translated to a subset of \(\R^{\modelDim}\) satisfying
\begin{equation}
	\label{eq:bounded coeffs} \sum_{j=1}^{\modelDim} \conCoeffs \evalFunc \in \modelClass
\end{equation}
and
\begin{equation}
	\label{eq:data satisfaction} \sum_{j=1}^{\modelDim} \conCoeffs \functional (\evalFunc) = \out \quad \text{for } i = 1, \ldots, \sampleSize.
\end{equation}
The constraint \eqref{eq:bounded coeffs} translates to \(\conCoeffs[] \in \coeffSet \subset \R^{\modelDim}\) for an appropriately defined compact set \(\coeffSet\).
Defining
\[
	\out[] \Let \pmat{\out[1]\\ \vdots \\ \out[\sampleSize]} \in \R^{\sampleSize} \quad \text{and} \quad \measMatrix \Let \pmat{\functional[1](\evalFunc[1]) & \functional[1](\evalFunc[2]) & \ldots & \functional[1](\evalFunc[\modelDim])\\
	\vdots & \vdots & \ddots & \vdots\\
	\functional[\sampleSize](\evalFunc[1]) & \functional[\sampleSize](\evalFunc[2]) & \ldots & \functional[\sampleSize](\evalFunc[\modelDim])
	},
\]
the constraint \eqref{eq:data satisfaction} is written concisely as
\[
	\measMatrix \conCoeffs[] = \out[].
\]

With the data set \(\dataSet\) as above and using the prescribed discretization, \eqref{eq:finite learning} takes the form:
\begin{equation}
	\label{eq:linear functional measurements}
	\minimize_{\funcCoeffs[] \in \R^{\discDim}} \quad \max_{\substack{\conCoeffs[] \in \coeffSet \\ \measMatrix \conCoeffs[] = \out[]}} \norm{\sum_{i=1}^{\discDim} \funcCoeffs \basisFunc - \sum_{j=1}^{\modelDim} \conCoeffs \evalFunc}.
\end{equation}

Figure \ref{fig:learning setting} illustrates a typical scenario of the learning problem described in \eqref{eq:linear functional measurements}. The region shaded in blue represents the discretized hypothesis class \(\discModel\) and the goal in the learning setting is to find the best approximant of the region colored in red.
\begin{figure}[h!]
    \centering
		\includegraphics[scale=0.15]{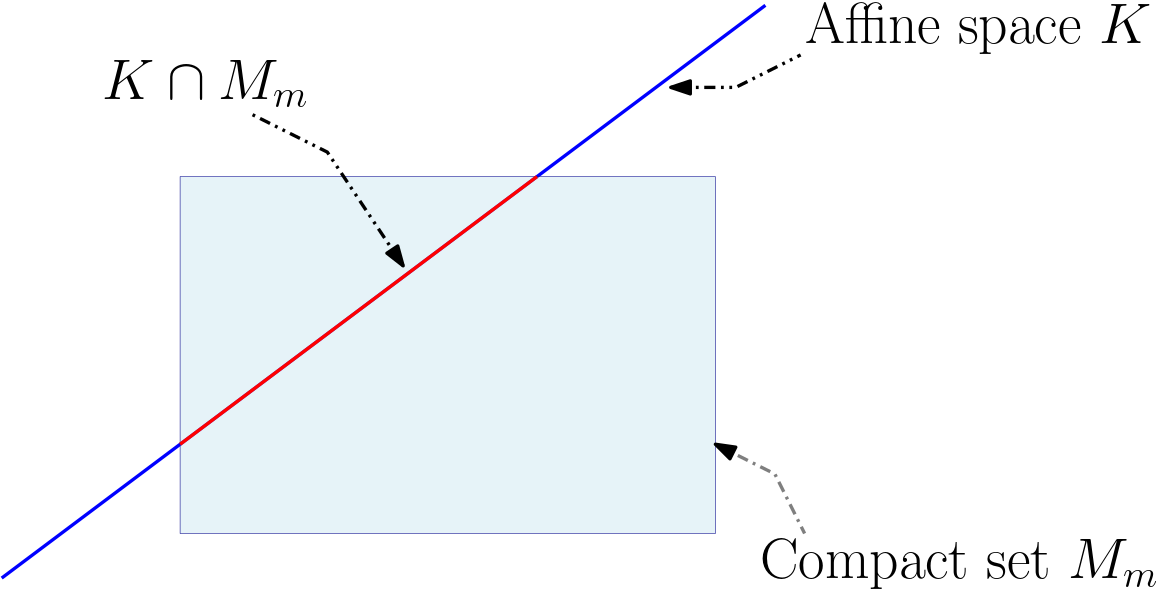}
    \caption{Depiction of the constraint sets of \eqref{eq:linear functional measurements}.}
	\label{fig:learning setting}
\end{figure}

\subsection*{RKHS setting}

    Suppose that the underlying space is a reproducing kernel Hilbert space (RKHS) denoted by \(\Hilbert\). Recall that a Hilbert space \((\Hilbert, \inprod{\cdot}{\cdot})\) of functions on a nonempty set \(\domain\) is an RKHS over the field \(\R\) if every evaluation functional is bounded, i.e., for each \(\inp[] \in \domain\) the linear map
\[
	\Hilbert \ni f \mapsto \evaluation_{\inp[]}(f) \Let f(\inp[]) \in \R
\]
is bounded. Every RKHS is equipped with its unique \emph{reproducing kernel} \(\kernel\) \cite[Chapter 2]{ref:PauRag-16}, a mapping
\[
	\kernel: \domain \times \domain \ra \R
\]
such that for every \(\inp[] \in \domain\) we have
\[
	\evaluation_{\inp[]}(f) = \inprod{\kernel(\inp[], \cdot)}{f}.
\]
Let \(\kernel_{\inp[]}\) denote the function in \(\Hilbert\) corresponding to the evaluation functional \(\evaluation_{\inp[]}\) at \(\inp[]\), that is characterized by the preceding equality. Let \(\sampleSize \in \N\) and suppose the data set \(\dataSet\) is given in terms of data points
\[
	\dataSet \Let \set{(\inp, \out) \suchthat i = 1, \ldots, \sampleSize} \subset \domain \times \R
\]
The family of functions in \(\Hilbert\) satisfying the data is described by
\[
    \relSet = \set[\Big]{f \in \Hilbert \suchthat \inprod{\kernel_{\inp}}{f} = \out \quad \text{for } i = 1, \ldots, \sampleSize}.
\]

We set the search space \(\discSpace\) for the best approximant to be the subspace (of \(\Hilbert\)) spanned by the finite family of functions (chosen by the designer)
\[
	\discSpace \Let \linspan \set{\basisFunc \in \Hilbert \suchthat i = 1, \ldots, \discDim},
\]
and let the reduced model class be restricted to a subset of the finite dimensional subspace
\[
	\discModel \Let \linspan \set{\evalFunc \in \modelClass \suchthat j = 1, \ldots, \modelDim},
\]
Note that since \(\modelClass\) is compact in \(\Hilbert\), \(\relSet \cap \discModel\) is compact. As a consequence, the coefficients \(\conCoeffs[] \in \R^{\modelDim}\) of any function in \(\relSet \cap \discModel\) are restricted to a compact subset \(\coeffSet \subset \R^{\modelDim}\) of the affine space corresponding to the data satisfaction:
\[
	\sum_{j=1}^{\modelDim} \conCoeffs \inprod{\kernel_{\inp}}{\evalFunc} = \out \quad \text{for } i = 1, \ldots, \sampleSize.
\]
Equivalently, by defining
\begin{align*}
	\out[] \Let \pmat{\out[1] \\ \vdots \\ \out[\sampleSize]} \quad \text{and} \quad
	\dataMatrix \Let  \pmat{\inprod{\kernel_{\inp[1]}}{\evalFunc[1]} & \inprod{\kernel_{\inp[1]}}{\evalFunc[2]} & \ldots & \inprod{\kernel_{\inp[1]}}{\evalFunc[\modelDim]}\\
	\vdots & \vdots & \ddots & \vdots\\
	\inprod{\kernel_{\inp[\sampleSize]}}{\evalFunc[2]} & \inprod{\kernel_{\inp[\sampleSize]}}{\evalFunc[2]} & \ldots & \inprod{\kernel_{\inp[\sampleSize]}}{\evalFunc[\modelDim]}},
\end{align*}
the constraint on the coefficients \(\conCoeffs[]\) can be written concisely as
\[
	\conCoeffs[] \in \set[\big]{w \in \coeffSet \suchthat \dataMatrix w = \out[]}
\]
Thus, in this setting, \eqref{eq:finite learning} takes the following form:
\begin{equation}
	\label{eq:RKHS learning}
		\minimize_{\funcCoeffs[] \in \R^{\discDim}} \quad \max_{\substack{\conCoeffs[] \in \coeffSet \\ \dataMatrix \conCoeffs[] = \out[]}} \norm{\sum_{i=1}^{\discDim} \funcCoeffs \basisFunc - \sum_{j=1}^{\modelDim} \conCoeffs \evalFunc}
\end{equation}

Both the problems \eqref{eq:linear functional measurements} and \eqref{eq:RKHS learning} are ready to be recast in the language of convex semi-infinite programs, which we treat next at the general level of \eqref{eq:finite learning}.

\subsection*{Reformulation of \eqref{eq:finite learning} as a convex SIP}
\label{sec:minmax problems}
We show that the min-max formulation \eqref{eq:finite learning} of the learning problem along with the special cases \eqref{eq:linear functional measurements} and \eqref{eq:RKHS learning} belong to a broader class of min-max optimization problems which can be reformulated into convex semi-infinite programs.

Consider the following min-max problem:
\begin{equation}
    \label{eq:minimax}
    \begin{aligned}
        & \minimize \limits_{\state \in \admStates} && \max \limits_{\conInp \in \admControls} \ \objective (\state, \conInp),
    \end{aligned}
\end{equation}
with the following data:
\begin{enumerate}[label=\textup{(\ref{eq:minimax}-\roman*)}, align=left, leftmargin=*, widest=iii]
    \item \label{a:closed minSet} \(\admStates \subset \R^{\sysDim}\) is a closed and convex set with nonempty interior.
    \item \label{a:compact maxSet}\(\admControls\) is a compact set.
    \item \label{a:convex objective} The objective function \(\objective: \admStates \times \admControls \ra \R\) is quasi-convex in the minimizing variable, that is \(\objective(\cdot, \conInp)\) is quasi-convex for each \(\conInp \in \admControls\).
\end{enumerate}

The finitary version \eqref{eq:finite learning} of the Chebyshev center problem fits the description of \eqref{eq:minimax} where
\begin{enumerate}[label=\textup{(\ref{eq:finite learning}-\roman*)}, align=left, leftmargin=*, widest=iii]
	\item \(\discSpace \equiv \R^{\discDim}\), which is clearly convex and closed, plays the role of \(\admStates\).
	\item The compact set \(\relSet \cap \discModel\) represents \(\admControls\).
	\item The objective/cost in \eqref{eq:finite learning} is \(\norm{f - g}\) with minimization on \(f \in \discSpace\) and maximization on \(g \in \relSet \cap \discModel\). It is clear to see that for a fixed \(g \in \relSet \cap \discModel\), the sublevel sets of \(\norm{\cdot - g}\) are norm balls in \(\discSpace\) and hence the objective is quasi-convex and continuous in the minimizing variable.
\end{enumerate}

The optimization problem \eqref{eq:minimax} can be recast as the convex semi-infinite program:
\begin{equation}
    \label{eq:SIP minimax}
    \begin{aligned}
        & \minimize_{\state, \slack} && \slack \\
        & \sbjto && \begin{cases}
            \objective(\state, \conInp) \le \slack \quad \text{for all } \conInp \in \admControls,\\
            (\slack, \state) \in \R \times \admStates.
        \end{cases}
    \end{aligned}
\end{equation}
Observe that under the hypotheses \ref{a:closed minSet} -- \ref{a:convex objective}, the convex SIP \eqref{eq:SIP minimax} satisfies the hypotheses \ref{a:closed admStates} -- \ref{a:compact constraint index} in \secref{sec:MSA algo}. Indeed,
\begin{itemize}[label=\textbullet, leftmargin=*, align=left]
    \item the objective in \eqref{eq:SIP minimax} is the linear (and hence convex and continuous) map:
\[
    \R \times \admStates \ni (\slack, \state) \mapsto \slack \in \R;
\]
    \item the constraint function
\[
    \R \times \admStates \times \admControls \ni (\slack, \state, \conInp) \mapsto \objective(\state, \conInp) - \slack \in \R
\]
is upper semi-continuous, in addition to
\[
    \R \times \admStates \ni (\slack, \state) \mapsto \objective(\state, \conInp) - \slack \in \R
\]
being quasi-convex for each fixed \(\conInp \in \admControls\);
    \item an interior point of \(\admStates\) coupled with a large enough \(\tilde{\slack}\) is also the interior point of the feasible set \(\set[\big]{(\slack, \state) \in \R \times \admStates \suchthat \objective(\state, \conInp) - \slack \le 0 \text{ for all } \conInp \in \admControls}\).
\end{itemize}

The MSA algorithm, discussed in \secref{sec:MSA algo} can be employed to recover the optimal value of the min-max problem \eqref{eq:minimax}. Under additional hypotheses such as \(\objective\) being bounded below or \(\admStates\) being compact, Proposition \ref{prop:extraction of SIP solution} can be used to extract a minimizer.

The following proposition guarantees the extraction of minimizer of \eqref{eq:minimax} via the MSA algorithm for a special case by showing uniqueness of the minimizer; see Remark \ref{rem:sufficient condition}. Although the result is well-known, we provide a brief proof for completeness.
\begin{proposition}
    \label{prop:uniqueness of minimax solution}
    If the hypothesis \ref{a:convex objective} is strengthened to strict quasi-convexity of \(\objective\) in the minimizing variable, there exists a unique minimizer for the min-max problem \eqref{eq:minimax}.
\end{proposition}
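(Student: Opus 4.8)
The plan is to reduce the min--max problem \eqref{eq:minimax} to the minimization of the value function
\[
	\phi(\state) \Let \max_{\conInp \in \admControls} \objective(\state, \conInp), \qquad \state \in \admStates,
\]
and to prove that, once \ref{a:convex objective} is strengthened to strict quasi-convexity of \(\objective\) in the minimizing variable, \(\phi\) is itself \emph{strictly} quasi-convex on the convex set \(\admStates\); since a strictly quasi-convex function has at most one minimizer over a convex set, this settles uniqueness, and together with existence of a minimizer --- which holds under the ambient regularity, in particular whenever \(\objective(\cdot, \conInp)\) is continuous, as in the Chebyshev-center applications where \(f \mapsto \max_{g}\norm{f - g}\) is continuous and coercive (see also the extraction hypotheses recalled just above the statement) --- one obtains the claim. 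The first point to record is that the inner supremum is attained: as \(\admControls\) is compact by \ref{a:compact maxSet} and \(\objective(\state, \cdot)\) is upper semi-continuous for each fixed \(\state\) --- the very regularity of \(\objective\) used to recast \eqref{eq:minimax} as the convex SIP \eqref{eq:SIP minimax} --- the maximum defining \(\phi(\state)\) is achieved at some \(\conInp^{\star}(\state) \in \admControls\), so \(\phi\) is real-valued and the minimizers of \eqref{eq:minimax} are exactly the minimizers of \(\phi\) over \(\admStates\).

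For the strict quasi-convexity of \(\phi\), I would fix \(\state_{1} \neq \state_{2}\) in \(\admStates\) and \(\lambda \in (0,1)\), put \(\state_{\lambda} \Let \lambda \state_{1} + (1-\lambda) \state_{2}\), which lies in \(\admStates\) by convexity of \(\admStates\) from \ref{a:closed minSet}, and pick a maximizer \(\conInp_{\lambda} \in \admControls\) of \(\objective(\state_{\lambda}, \cdot)\) as above. Strict quasi-convexity of \(\objective(\cdot, \conInp_{\lambda})\) then yields
\[
	\phi(\state_{\lambda}) = \objective(\state_{\lambda}, \conInp_{\lambda}) < \max\bigl\{ \objective(\state_{1}, \conInp_{\lambda}),\, \objective(\state_{2}, \conInp_{\lambda}) \bigr\} \le \max\bigl\{ \phi(\state_{1}),\, \phi(\state_{2}) \bigr\},
\]
establishing strict quasi-convexity of \(\phi\). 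Uniqueness is then the standard midpoint contradiction: if \(\state_{1}^{\star} \neq \state_{2}^{\star}\) were two minimizers of \(\phi\) over \(\admStates\), sharing the optimal value \(v^{\star}\), then \(\tfrac{1}{2}(\state_{1}^{\star} + \state_{2}^{\star}) \in \admStates\) and \(\phi\bigl(\tfrac{1}{2}(\state_{1}^{\star} + \state_{2}^{\star})\bigr) < \max\{v^{\star}, v^{\star}\} = v^{\star}\), contradicting the optimality of \(v^{\star}\).

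The only step I anticipate as a genuine obstacle is the one where the \emph{pointwise} strict inequality \(\objective(\state_{\lambda}, \conInp) < \max\{\phi(\state_{1}), \phi(\state_{2})\}\), valid for every \(\conInp \in \admControls\), is upgraded to a strict inequality for the supremum over \(\conInp\): a supremum of quantities each lying strictly below a bound may still equal that bound when it is not attained. This is precisely where compactness of \(\admControls\) (hypothesis \ref{a:compact maxSet}) together with upper semi-continuity of \(\objective\) in the maximizing variable are indispensable --- the same two ingredients that legitimise the SIP reformulation \eqref{eq:SIP minimax}. I would also flag that if ``strict quasi-convexity'' is taken in the weaker sense (strict inequality only when \(\objective(\state_{1}, \conInp) \neq \objective(\state_{2}, \conInp)\)), a short extra argument is needed to exclude \(\objective(\cdot, \conInp)\) being constant along the segment \([\state_{1}^{\star}, \state_{2}^{\star}]\) for the relevant \(\conInp\); with the strong form used above this does not arise, and the remainder of the proof is routine.
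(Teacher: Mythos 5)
Your proof is correct and is essentially the paper's argument: both reduce to the midpoint of two purported minimizers and exploit strict quasi-convexity of \(\objective(\cdot,\conInp)\) together with compactness of \(\admControls\) to force a strict decrease of the worst-case value, contradicting optimality. Your packaging via strict quasi-convexity of the value function \(\phi\) --- evaluating at a maximizer \(\conInp_{\lambda}\) of the midpoint rather than taking a supremum of pointwise strict inequalities --- handles the one delicate step (which the paper dispatches by simply invoking compactness) a bit more transparently, but it is the same proof.
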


\begin{proof}
    Consider the equivalent SIP formulation \eqref{eq:SIP minimax} of the optimization problem \eqref{eq:minimax}. Let \(\slack \opt\) be the optimal value of \eqref{eq:minimax}. Suppose that \((\slack \opt, \optSol_{1})\) and \((\slack \opt, \optSol_{2})\) are two minimizers of \eqref{eq:SIP minimax}. Let \(\solution\) be the midpoint of \(\optSol_{1}\) and \(\optSol_{2}\). We have \(\solution \in \admStates\) due to convexity of \(\admStates\).

    For a fixed \(\conInp \in \admControls\), we have
    \[
        \objective(\optSol_{1}, \conInp) \le \slack \opt \quad \text{and} \quad \objective(\optSol_{2}, \conInp) \le \slack \opt.
    \]
    By strict quasi-convexity of \(\objective\) in the first argument,
    \[
        \objective(\solution, \conInp) < \max \set[\big]{\objective(\optSol_{1}, \conInp), \objective(\optSol_{2}, \conInp)} < \slack \opt.
    \]
    Since \(\conInp \in \admControls\) is arbitrary and \(\admControls\) is compact,
    \[
        \max \limits_{\conInp \in \admControls} \objective(\solution, \conInp) < \slack \opt.
    \]
    This contradicts the optimality of \(\slack \opt\) and this completes our proof.
\end{proof}

\subsubsection*{Chebyshev centers}
\label{s:Chebyshev center}



In the context of the optimal learning problem introduced in \secref{sec:learning Chebyshev center}, a Chebyshev center of \(\relSet\) can be equivalently defined as an optimizer of the min-max problem
\begin{equation}
    \label{eq:Chebyshev center}
    \minimize_{\state \in \Banach} \; \sup \limits_{\candidate \in \relSet} \norm{\state - \candidate};
\end{equation}
indeed, observe that the Chebyshev radius \(\chebRadius{\relSet}\) is the optimal value of \eqref{eq:Chebyshev center}. Moreover, Chebyshev centers of the compact set \(\relSet\) coincide with those of its convex hull \(\chull \relSet\). The optimization problem \eqref{eq:Chebyshev center} can be reformulated into the following convex semi-infinite program:
\begin{equation}
\label{eq:Chebyshev reform}
\begin{aligned}
    & \minimize \limits_{\slack, \state} && \slack\\
    & \sbjto && \begin{cases}
        \norm{\state - \candidate} \le \slack \quad \text{for all } \candidate \in \relSet,\\
        (\slack, \state) \in \lcro{0}{+\infty} \times \Banach,
    \end{cases}
\end{aligned}
\end{equation}
in the sense that the value of \eqref{eq:Chebyshev reform} is the Chebyshev radius of \(\relSet\) and an optimizer in \(\state\) of \eqref{eq:Chebyshev reform} is a Chebyshev center of \(\relSet\).

If the norm \(\norm{\cdot}\) on the space \(\Banach\) is strictly convex, then the objective function of \eqref{eq:Chebyshev center} is strictly quasi-convex in the minimizing variable \(\state\), and consequently, in the light of Proposition \ref{prop:uniqueness of minimax solution}, there exists a unique Chebyshev center of \(\relSet\). Otherwise, Chebyshev centers of \(\relSet\) may be extracted by means of the regularization procedure of Proposition \ref{prop:extraction of SIP solution}. To wit, the MSA algorithm and its extension in Proposition \ref{prop:extraction of SIP solution} furnishes a numerically tractable technique for the exact computation of Chebyshev centers of compact subsets of finite-dimensional normed vector spaces. We shall illustrate the technique in \secref{sec:simulations} with specific numerical examples.

\section{Extraction of solutions to convex semi-infinite programs}
\label{sec:MSA algo}
This section contains a detailed treatment of a mechanism to extract solutions -- both the optimal value and optimizers -- of convex semi-infinite programs. The results herein are of independent interest and the Chebyshev center problem (i.e., the computation of both the Chebyshev radius and Chebyshev centers) turns out to be special cases of the mechanism.

Consider the following convex semi-infinite program
\begin{equation}
    \label{eq:convex SIP}
    \begin{aligned}
        & \minimize && \objective (\state) \\
        & \sbjto && \begin{cases}
            \constraintMap(\state, \conInp) \le 0 \quad \text{for all } \conInp \in \admControls,\\
            \state \in \admStates,
        \end{cases}
    \end{aligned}
\end{equation}
with the following data:
\begin{enumerate}[label=\textup{(\ref{eq:convex SIP}-\roman*)}, align=left, widest=iii, leftmargin=*]
    \item \label{a:closed admStates} \(\admStates \subset \R^{\sysDim}\) is a closed and convex set with nonempty interior.
    \item \label{a:Slater condition} The feasible set \(\feasibleStates \Let \set[\big]{\state \in \admStates \suchthat \constraintMap (\state, \conInp) \le 0 \text{ for all } \conInp \in \admControls}\) is assumed to have nonempty interior.
    \item \label{a:objective} The objective function \(\admStates \ni \state \mapsto \objective(\state) \in \R\) is quasi-convex and upper semi-continuous.
    \item \label{a:constraint map} The constraint function \(\admStates \times \admControls \ni (\state, \conInp) \mapsto \constraintMap(\state, \conInp) \in \R\) is upper semi-continuous in both the variables and is strictly quasi-convex in \(\state\) for each fixed \(\conInp\).
    \item \label{a:compact constraint index} The constraint index set \(\admControls\) is a compact set.
\end{enumerate}

\begin{remark}
	\label{rem:infinite dimensional constraints} The set \(\admControls\) is \embf{not} required to be finite-dimensional, but for numerical tractability one is typically forced to consider finite-dimensional versions of \(\admControls\) in practice.
\end{remark}

Convex SIPs arise in a plethora of applications including portfolio optimization, statistics, learning, estimation theory, and approximation theory among others. We refer the reader to the sweeping survey \cite{ref:BerBroCar-10} and the textbooks \cite{ref:BenElGNem-09, ref:MarMarYohSal-19} for details and applications. In addition, we also point to the recent article \cite{ref:DasAraCheCha-22} for background literature and perspective; the body of results in the current section may be viewed as a natural continuation of \cite{ref:DasAraCheCha-22}.

The algorithm established in \cite{ref:DasAraCheCha-22} for solving convex semi-infinite programs via targeted sampling, which we shall call the MSA algorithm in the sequel,\footnote{The name is derived from the three students who contributed to the results in \cite{ref:DasAraCheCha-22}: Mishal Assif P.K., Souvik Das, and Ashwin Aravind.} readily gives the optimal value of a special case of \eqref{eq:convex SIP}.\footnote{The precise hypotheses of the special case will be explained below.} To the best of our knowledge, till date it is the only numerically tractable algorithm that computes the precise value of convex SIPs. However, since it solves a relaxed convex program \cite[Equation (2.7)]{ref:DasAraCheCha-22}, the set of minimizers obtained thereby is only a priori known to be a superset of the original solutions. In the case of the objective \(\objective\) being strictly convex, the solution to the relaxed program \cite[Equation (2.7)]{ref:DasAraCheCha-22} coincides with that of the original problem in the sense that
\begin{itemize}[label=\textbullet, align=left, leftmargin=*]
    \item the optimal values coincide, and
    \item the optimizer to the relaxed problem also optimizes the original SIP.
\end{itemize}

\begin{remark}
	\label{rem:sufficient condition}
	A sufficient condition for the optimizer to the relaxed program \cite[Equation (2.7)]{ref:DasAraCheCha-22} to be an optimizer to the original problem is the uniqueness of optimizers for the relaxed programs. Strict convexity of the objective \(\objective\) is one way to ensure that this sufficient condition is satisfied. It is important to note that the sufficient condition is the uniqueness of minimizers for the relaxed programs and not just for the original problem. The example in \secref{sec:solid disk} indicates this requirement.
\end{remark}

As an immediate illustration, consider the problem of constructing the Chebyshev ball of a convex subset \(\relSet\) of \(\R^2\) defined by
\[
	\relSet = \set[\bigg]{(\xpos, \ypos) \in \R^{2} \suchthat
	\begin{aligned}
				& \xpos + 2 \ypos \le 2,\; -\xpos + 2 \ypos \le 2, \\
				& -\xpos + 4 \ypos \ge -2, \; \xpos \ge -2.
	\end{aligned}}
\]
Recall that the \(\ell_1\)-Chebyshev ball of \(\relSet\) is a circumscribing \(\ell_1\)-ball of the smallest radius. The mathematical problem of finding a Chebyshev ball of \(\relSet\) may be formulated as a solution to the min-max problem
\[
	\min_{x\in\R^2} \max_{y\in \relSet} \norm{x-y}_{\ell_1},
\]
whose \emph{value} is the Chebyshev radius of \(\relSet\) and an optimizer (in the variable \(x\)) is a Chebyshev center of \(\relSet\). This min-max problem permits a reformulation as a convex SIP in a standard way, and the MSA algorithm applied to that convex SIP leads to the correct Chebyshev radius of \(2.5\) but the \(\ell_1\)-ball of radius \(2.5\) obtained from the algorithm may not circumscribe \(\relSet\), as shown in the following figure:
\[
	\includegraphics[scale=0.6]{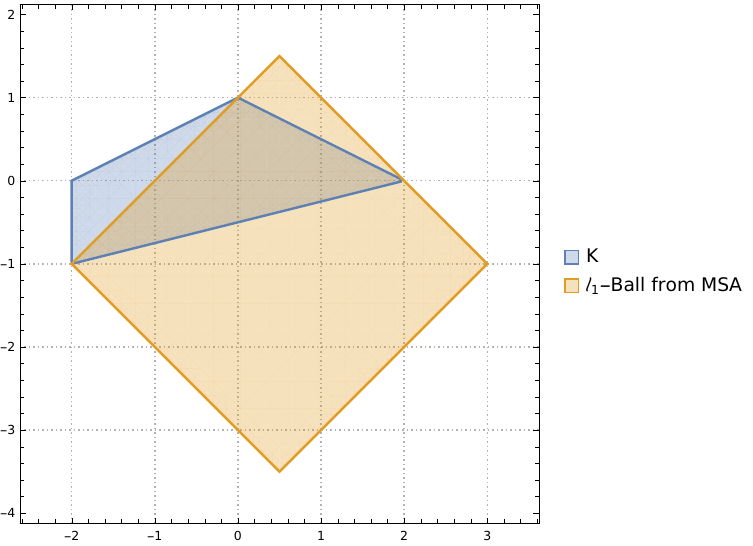}
\]
This situation arises because while the MSA algorithm was designed to match the \emph{values} of convex SIPs (and the MSA algorithm correctly finds the Chebyshev radius in this example), the optimizers of the two problems may be different.\footnote{We shall revisit this example in \secref{sec:simulations}.}

In subsection \secref{s:optimizer extraction}, we establish a mechanism to extend the capability of the MSA algorithm to extract \emph{optimizers} for general convex objective functions. In particular, our contributions are the following:
\begin{enumerate}[label=(\Roman*), align=left, leftmargin=*, widest=II]
    \item \label{clm:extension} The original MSA algorithm in \cite{ref:DasAraCheCha-22} is generalized and shown to be applicable to the data \ref{a:closed admStates} -- \ref{a:compact constraint index}. This entails the following generalizations:
        \begin{itemize}
            \item The ambit of \cite[hypothesis ((1.1)-c)]{ref:DasAraCheCha-22} involving convexity and continuity of the objective \(\objective\) is broadened to include quasi-convexity and upper semi-continuity of \(\objective\);
            \item In \cite{ref:DasAraCheCha-22}, the constraint map \(\constraintMap\) is required to be  convex in \(\state\) for each fixed \(\conInp \in \admControls\), and jointly continuous in \(\state\) and \(\conInp\). The ambit of this hypothesis is broadened to include strict quasi-convexity of \(\constraintMap\) in \(\state\) for every \(\conInp \in \admControls\) and joint upper semi-continuity in both variables.
        \end{itemize}
    \item \label{clm:regularization} Consider the data \ref{a:closed admStates} -- \ref{a:compact constraint index} associated with the problem \eqref{eq:convex SIP}. When the objective \(\objective\) is convex and continuous, we establish an approach via regularization to extract an optimizer of \eqref{eq:convex SIP} using the MSA algorithm itself.
\end{enumerate}

\subsection{Generalization of the MSA algorithm (\`a la \ref{clm:extension})}
\label{s:generalization}

We first briefly discuss the key ideas behind the MSA algorithm and point out how the same algorithm is applicable to the generalized data \ref{a:closed admStates} -- \ref{a:compact constraint index} accompanying the problem \eqref{eq:convex SIP}.

Let \(\relObjective: \admControls^{\sysDim} \ra \R\) be the map defined by
\begin{equation}
    \label{eq:finite constraint problem}
    \relObjective \bigl(\conInp_{1}, \ldots, \conInp_{\sysDim}\bigr) = \inf \limits_{\state \in \admStates} \set[\big]{\objective(\state) \suchthat \constraintMap(\state, \conInp_{i}) \le 0 \quad \text{for } i = 1, \ldots, \sysDim}.
\end{equation}
    Note that the evaluation of \(\relObjective\) involves a finitely constrained convex program which is a relaxed version of \eqref{eq:convex SIP}. Since the minimization in \eqref{eq:finite constraint problem} is over a larger set compared to that in \eqref{eq:convex SIP}, the evaluation of the function \(\relObjective\) yields a value that is at most equal to the optimal value of \eqref{eq:convex SIP}.

        In the proof of \cite[Theorem 1]{ref:DasAraCheCha-22} the authors invoke \cite[Theorem\ 4.1]{ref:Borwein-81} to show equivalence between \eqref{eq:convex SIP} under the specialized data wherein the objective \(\objective\) is stipulated to be a convex and continuous function, and the constraint map \(\constraintMap\) is required to be convex in \(\state\) and jointly continuous in \(\state\) and \(\conInp\), and the global optimization problem:
\begin{equation}
    \label{eq:equivalent problem}
    \sup \limits_{(\conInp_{1}, \ldots, \conInp_{\sysDim}) \in \admControls^{\sysDim}} \relObjective \bigl(\conInp_{1}, \ldots, \conInp_{\sysDim}\bigr).
\end{equation}
The equivalence claimed in \cite{ref:DasAraCheCha-22} is in the sense that the value of \eqref{eq:equivalent problem} is precisely the value of \eqref{eq:convex SIP} under the aforementioned specialised data. The MSA algorithm solves the global optimization problem \eqref{eq:equivalent problem} to obtain the optimal value of the convex semi-infinite program.

        \cite[Theorem\ 4.1]{ref:Borwein-81} can be invoked on the broader class of data in \ref{a:closed admStates} -- \ref{a:compact constraint index} and the proof of \cite[Theorem 1]{ref:DasAraCheCha-22} follows through as is. Hence, solving \eqref{eq:equivalent problem} is sufficient to obtain the optimal value of \eqref{eq:convex SIP} even with the data \ref{a:closed admStates} -- \ref{a:compact constraint index}; consequently, the proof of \cite[Theorem 1]{ref:DasAraCheCha-22} carries over verbatim to our more general context.

\subsection{Extracting optimizers via regularization (\`a la \ref{clm:regularization})}
\label{s:optimizer extraction}

Let \(\bigl(\conInp^{\circ}_{1}, \ldots, \conInp^{\circ}_{\sysDim}\bigr)\) be a global optimiser of \(\relObjective\) in \(\admControls^{\sysDim}\). Then \(\relObjective \bigl(\conInp^{\circ}_{1}, \ldots, \conInp^{\circ}_{\sysDim}\bigr)\) is the optimal value of \eqref{eq:convex SIP} in view of our arguments in \secref{s:generalization}. 
In addition, the optimizers of \eqref{eq:convex SIP} lie in the set of solutions to the minimization problem in \eqref{eq:finite constraint problem} that comes up while evaluating \(\relObjective\) at one of its global optimizer \(\bigl(\conInp^{\circ}_{1}, \ldots, \conInp^{\circ}_{\sysDim}\bigr)\). Since this minimization problem is on a relaxed constraint set compared to that in \eqref{eq:convex SIP}, the challenge at this stage is to extract those optimizers that lie in the feasible set of the original optimization problem \eqref{eq:convex SIP}.

Before delving into a new method of extracting optimizers (to be established below), we make a few preliminary observations on convex optimization in \secref{s:convex opt} based on which our method is built.

\subsubsection{Results from convex optimization}
\label{s:convex opt}

Consider the following convex program
\begin{equation}
\label{eq:convex problem}
\tag{\CSIP}
    \begin{aligned}
    \minimize_{\state \in \feasibleStates} && \objective(\state),
    \end{aligned}
\end{equation}
where \(\feasibleStates \subset \R^{\sysDim}\) is a closed and convex set, \(\objective: \R^{\sysDim} \ra \R\) is a convex and continuous. Let \(\optVal\) be the optimal value of \eqref{eq:convex problem} and let \(\argmin \CSIP\) denote the set of solutions of the convex program \eqref{eq:convex problem}.

Consider a variation of \eqref{eq:convex problem} where we perturb the objective by a strictly convex function \(\regularizer\), with \(\eps > 0\):
\begin{equation}
\label{eq:perturbed convex problem}
    \tag{\CSIP[\eps]}
    \begin{aligned}
        & \minimize_{\state \in \feasibleStates} && \objective(\state) + \eps \regularizer(\state) \\
    \end{aligned}
\end{equation}
Note that by construction, the problem \eqref{eq:convex problem} and the perturbed problems \eqref{eq:perturbed convex problem} have the same feasible set \(\feasibleStates\).


\begin{lemma}
    \label{lem:minimizing sequence}
    Let \(\solution[\eps]\) be the unique minimizer of \eqref{eq:perturbed convex problem}. Then \(\objective(\solution[\eps])\) is monotone non-increasing as \(\eps \downarrow 0\) and \(\regularizer(\solution[\eps])\) is monotone non-decreasing as \(\eps \downarrow 0\). Moreover,
    \[
        \inf \limits_{\eps > 0} \objective(\solution[\eps]) = \optVal \quad \text{and} \quad \sup \limits_{\eps > 0} \regularizer(\solution[\eps]) \le \inf \limits_{\state \in \argmin \CSIP} \regularizer(\solution).
    \]
\end{lemma}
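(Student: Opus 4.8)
The plan is to extract all four assertions from just two facts: that each \(\solution[\eps]\) is feasible for \eqref{eq:convex problem}, and the defining optimality inequality of \eqref{eq:perturbed convex problem}. The only nonroutine ingredient is an \emph{exchange argument} establishing the two monotonicities, and I would carry that out first. Fix \(0 < \eps_{1} < \eps_{2}\) and write \(x_{i} \Let \solution[\eps_{i}]\). Optimality of \(x_{1}\) for the \(\eps_{1}\)-problem and of \(x_{2}\) for the \(\eps_{2}\)-problem (each using \(x_{1}, x_{2} \in \feasibleStates\)) gives \(\objective(x_{1}) + \eps_{1}\regularizer(x_{1}) \le \objective(x_{2}) + \eps_{1}\regularizer(x_{2})\) and \(\objective(x_{2}) + \eps_{2}\regularizer(x_{2}) \le \objective(x_{1}) + \eps_{2}\regularizer(x_{1})\); adding these and cancelling the \(\objective\)-terms leaves \((\eps_{1}-\eps_{2})\bigl(\regularizer(x_{1})-\regularizer(x_{2})\bigr) \le 0\), hence \(\regularizer(x_{1}) \ge \regularizer(x_{2})\) because \(\eps_{1} < \eps_{2}\) --- this is the asserted monotonicity of \(\regularizer(\solution[\eps])\). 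Feeding \(\regularizer(x_{2}) \le \regularizer(x_{1})\) back into the first inequality yields \(\objective(x_{1}) - \objective(x_{2}) \le \eps_{1}\bigl(\regularizer(x_{2}) - \regularizer(x_{1})\bigr) \le 0\), the asserted monotonicity of \(\objective(\solution[\eps])\). Consequently both one-sided limits as \(\eps \downarrow 0\) exist in the extended reals, with \(\lim_{\eps\downarrow 0}\objective(\solution[\eps]) = \inf_{\eps>0}\objective(\solution[\eps])\) and \(\lim_{\eps\downarrow 0}\regularizer(\solution[\eps]) = \sup_{\eps>0}\regularizer(\solution[\eps])\).

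Next I would handle the bound on the regularizer. If \(\argmin\CSIP\) is empty the right-hand side is \(+\infty\) and there is nothing to prove, so take any \(\optSol \in \argmin\CSIP \subseteq \feasibleStates\), for which \(\objective(\optSol) = \optVal\). Optimality of \(\solution[\eps]\) yields \(\objective(\solution[\eps]) + \eps\regularizer(\solution[\eps]) \le \optVal + \eps\regularizer(\optSol)\), while feasibility of \(\solution[\eps]\) gives \(\objective(\solution[\eps]) \ge \optVal\); subtracting and dividing by \(\eps > 0\) gives \(\regularizer(\solution[\eps]) \le \regularizer(\optSol)\). Taking the supremum over \(\eps > 0\) and then the infimum over \(\optSol \in \argmin\CSIP\) finishes this part.

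Finally, for \(\inf_{\eps>0}\objective(\solution[\eps]) = \optVal\), the inequality \(\ge\) is immediate from feasibility of each \(\solution[\eps]\). For \(\le\), I would fix some \(\eps_{0} > 0\), set \(m \Let \regularizer(\solution[\eps_{0}])\), and invoke the first-step monotonicity to get \(\regularizer(\solution[\eps]) \ge m\) for all \(0 < \eps \le \eps_{0}\). Then for any \(z \in \feasibleStates\), optimality of \(\solution[\eps]\) gives \(\objective(\solution[\eps]) \le \objective(z) + \eps\bigl(\regularizer(z) - m\bigr)\) on \(0 < \eps \le \eps_{0}\); sending \(\eps \downarrow 0\) yields \(\inf_{\eps>0}\objective(\solution[\eps]) \le \objective(z)\), and taking the infimum over \(z \in \feasibleStates\) gives \(\le \optVal\). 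I expect this last step to be the main obstacle, in the following sense: the naive attempt --- passing to the limit directly in \(\objective(\solution[\eps]) \le \objective(z) + \eps\bigl(\regularizer(z) - \regularizer(\solution[\eps])\bigr)\) --- fails because a priori \(\regularizer(\solution[\eps])\) could drift to \(-\infty\) as \(\eps\downarrow 0\), so the term \(-\eps\regularizer(\solution[\eps])\) need not vanish; the monotonicity established first is exactly what supplies the uniform lower bound \(m\) that rules this out. Everything else is bookkeeping with the optimality inequalities.
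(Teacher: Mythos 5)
Your proof is correct, and the first two thirds of it --- the exchange argument giving the two monotonicities, and the comparison of \(\solution[\eps]\) against a point of \(\argmin\CSIP\) to obtain \(\regularizer(\solution[\eps]) \le \regularizer(\optSol)\) --- coincide with the paper's proof essentially line for line. The one place where you genuinely diverge is the final claim \(\inf_{\eps>0}\objective(\solution[\eps]) \le \optVal\): the paper derives it from the inequality \(\objective(\solution[\eps]) \le \optVal + \eps\bigl(\regularizer(\state) - \regularizer(\solution[\eps])\bigr)\) with \(\state \in \argmin\CSIP\) and then appeals to ``\(\regularizer\) being bounded on \(\feasibleStates\)'' to kill the term \(-\eps\regularizer(\solution[\eps])\), whereas you compare against an arbitrary feasible \(z\) and use the already-established monotonicity of \(\regularizer(\solution[\eps])\) to supply the uniform lower bound \(m = \regularizer(\solution[\eps_0])\). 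Your route buys two things: it does not invoke boundedness of \(\regularizer\) on \(\feasibleStates\) (which is not among the standing hypotheses of the section where the lemma is stated, and fails for, say, a quadratic regularizer on an unbounded feasible set), and it does not need \(\argmin\CSIP\) to be nonempty for this particular assertion. The paper's route is one line shorter when those extra assumptions are available (e.g., once \(\regularizer\) is taken positive, as it is in the later extraction section, the offending term is simply nonpositive). Your diagnosis of why the naive limit passage fails is exactly right, and your fix is the cleaner of the two.
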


\begin{proof}
    Let \(\eps > \eps' > 0\). Since \(\solution[\eps]\) is optimal for \(\CSIP[\eps]\) and  both \(\solution[\eps], \solution[\eps']\)  lie in the feasible set,
    \[
        \objective(\solution[\eps]) + \eps \regularizer(\solution[\eps]) \le \objective(\solution[\eps']) + \eps \regularizer(\solution[\eps']),
    \]
    which gives
    \[
        \objective(\solution[\eps]) - \objective(\solution[\eps']) \le \eps \bigl(\regularizer(\solution[\eps']) - \regularizer(\solution[\eps])\bigr).
    \]
    Similarly, by optimality of \(\solution[\eps']\) for \(\CSIP[\eps']\) we have
    \[
        \objective(\solution[\eps']) + \eps \regularizer(\solution[\eps']) \le \objective(\solution[\eps']) + \eps \regularizer(\solution[\eps]),
    \]
    leading to
    \[
        \objective(\solution[\eps]) - \objective(\solution[\eps']) \ge \eps' \bigl(\regularizer(\solution[\eps']) - \regularizer(\solution[\eps])\bigr).
    \]
    Combining the above two inequalities,
    \[
        (\eps - \eps') \bigl(\regularizer(\solution[\eps']) - \regularizer(\solution[\eps])\bigr) \ge 0,
    \]
    which shows that
    \[
        \regularizer(\solution[\eps']) \ge \regularizer (\solution[\eps]).
    \]
    Consequently, 
    \[
        \objective(\solution[\eps']) \le \objective(\solution[\eps]).
    \]
    Thus the family \(\set{\objective(\solution[\eps])}_{\eps>0}\) decreases with \(\eps\).

    Clearly, from the definition of \(\optVal\) we have \(\objective(\solution[\eps]) \ge \optVal\).
    For \(\state \in \argmin \CSIP\),
    \[
        \objective(\solution[\eps]) + \eps \regularizer(\solution[\eps]) \le \objective(\state) + \eps \regularizer(\state)
    \]
    which yields
    \[
        0 \le \objective(\solution[\eps]) - \optVal \le \eps \bigl (\regularizer(\state) - \regularizer(\solution[\eps])\bigr)
    \]
    and hence
    \[
        \sup \limits_{\eps > 0} \regularizer(\solution[\eps]) \le \inf \limits_{\state \in \argmin \CSIP} \regularizer (\state).
    \]
    This establishes the second assertion. Moreover,
    \[
        \objective(\solution[\eps]) \le \optVal + \eps (\regularizer(\state) - \regularizer(\solution[\eps]))
    \]
    Since \(\regularizer\) is bounded on \(\feasibleStates\), taking infimum over \(\eps > 0\) on both sides yields
    \[
        \inf \limits_{\eps > 0} \objective(\solution[\eps]) \le \optVal
    \]
    Thus \(\inf \limits_{\eps > 0} \objective(\solution[\eps]) = \optVal\), thereby establishing the first assertion, and this completes our proof.
\end{proof}


\begin{proposition}
    \label{prop:characterization of recovered solution}
    The family of solutions \(\set{\solution[\eps]}_{\eps > 0}\) has a unique cluster point \(\optSol\). Moreover, \(\optSol \in \argmin \CSIP\),
    \[
        \text{\(\optSol\) solves}\quad \minimize_{\state \in \argmin \CSIP} \regularizer(\state),
    \]
    and
    \[
        \lim \limits_{\eps \downarrow 0} \solution[\eps] = \optSol.
    \]

\end{proposition}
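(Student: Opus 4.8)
The plan is to bootstrap from Lemma~\ref{lem:minimizing sequence}, which already gives that $\objective(\solution[\eps])$ decreases to $\optVal$ and $\regularizer(\solution[\eps])$ increases, bounded above by $\inf_{\state \in \argmin \CSIP} \regularizer(\state)$. First I would establish that the family $\set{\solution[\eps]}_{\eps>0}$ is bounded, hence has at least one cluster point as $\eps \downarrow 0$. If $\feasibleStates$ is compact this is immediate; in general one argues that $\{\state \in \feasibleStates \suchthat \objective(\state) \le \objective(\solution[1])\}$ is a sublevel set on which $\regularizer$ (being strictly convex and finite on all of $\R^\sysDim$, hence coercive on this set after intersecting with the relevant region — or one simply assumes, as the regularization setup in \ref{clm:regularization} implicitly does, that $\regularizer$ is coercive so that each $\CSIP[\eps]$ has a minimizer and the $\solution[\eps]$ stay in a common bounded set). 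Since $\objective(\solution[\eps])$ is monotone and $\regularizer(\solution[\eps])$ is monotone and bounded, every cluster point $\optSol = \lim_k \solution[\eps_k]$ along $\eps_k \downarrow 0$ satisfies, by continuity of $\objective$ and $\regularizer$, both $\objective(\optSol) = \optVal$ and $\regularizer(\optSol) = \sup_{\eps>0}\regularizer(\solution[\eps]) =: r^\ast \le \inf_{\state\in\argmin\CSIP}\regularizer(\state)$. Closedness of $\feasibleStates$ gives $\optSol \in \feasibleStates$, and $\objective(\optSol) = \optVal$ gives $\optSol \in \argmin\CSIP$; combined with $\regularizer(\optSol) \le \inf_{\argmin\CSIP}\regularizer$ this forces $\optSol$ to solve $\minimize_{\state\in\argmin\CSIP}\regularizer(\state)$ and shows $r^\ast = \inf_{\argmin\CSIP}\regularizer$.

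Next I would prove uniqueness of the cluster point. The key observation is that the value $r^\ast = \min_{\state\in\argmin\CSIP}\regularizer(\state)$ is independent of which cluster point we picked, and $\argmin\CSIP$ is a closed convex set on which the strictly convex function $\regularizer$ attains its minimum at a \emph{unique} point — call it $\optSol$. Every cluster point of $\set{\solution[\eps]}$ lies in $\argmin\CSIP$ and achieves $\regularizer = r^\ast$, hence equals this unique $\optSol$. Therefore the bounded family $\set{\solution[\eps]}$ has a single cluster point as $\eps\downarrow 0$, which is equivalent to $\lim_{\eps\downarrow 0}\solution[\eps] = \optSol$ (a bounded net/sequence with a unique cluster point converges to it).

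The main obstacle, and the point that needs care, is the boundedness/existence issue: nothing in the statement of the proposition explicitly posits coercivity of $\regularizer$ or compactness of $\feasibleStates$, yet without one of these the perturbed problems $\CSIP[\eps]$ need not have minimizers and $\set{\solution[\eps]}$ need not be bounded (consider $\feasibleStates = \R$, $\objective \equiv 0$, $\regularizer(\state) = e^{\state}$). I would handle this by reading the hypothesis as inherited from the regularization framework of \ref{clm:regularization}, where $\regularizer$ is chosen by the designer to be strictly convex \emph{and coercive} (e.g.\ $\regularizer(\state) = \norm{\state}^2$), which simultaneously guarantees each $\solution[\eps]$ exists and is unique (justifying the phrasing of Lemma~\ref{lem:minimizing sequence}) and pins $\set{\solution[\eps]}$ inside the compact sublevel set $\{\regularizer \le \inf_{\argmin\CSIP}\regularizer\}$, since $\regularizer(\solution[\eps])$ is monotone nondecreasing with that supremum as its bound. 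Once this is settled the rest is the routine monotonicity-plus-continuity argument sketched above, leaning entirely on the two conclusions of Lemma~\ref{lem:minimizing sequence}.
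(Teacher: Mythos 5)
Your proposal is correct and follows essentially the same route as the paper: use Lemma \ref{lem:minimizing sequence} to show every cluster point lies in \(\argmin \CSIP\) and minimizes \(\regularizer\) there, invoke strict convexity of \(\regularizer\) on the convex set \(\argmin \CSIP\) to get uniqueness of that minimizer and hence of the cluster point, and conclude convergence from boundedness of the family plus uniqueness of the cluster point. The one point of divergence is the boundedness issue you rightly flag as the crux: the paper resolves it by invoking boundedness of the sublevel sets of \(\objective\) (a hypothesis stated in Proposition \ref{prop:extraction of SIP solution} and used ``by assumption'' inside this proof, though absent from the statement of Proposition \ref{prop:characterization of recovered solution} itself), whereas you propose coercivity of \(\regularizer\) together with the monotone bound \(\sup_{\eps>0}\regularizer(\solution[\eps]) \le \inf_{\state \in \argmin \CSIP}\regularizer(\state)\); both repairs are valid, and your explicit identification of the missing hypothesis is accurate.
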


\begin{proof}
    Let \((\solution[\eps_{n}])_{n \in \N}\), with \(\eps_{n} \downarrow 0\), be a subsequence in \(\set{\solution[\eps]}_{\eps>0}\) converging to \(\ol{\state} \in \feasibleStates\). 
    It follows from continuity of \(\objective\) and Lemma \ref{lem:minimizing sequence} that
    \[
        \objective(\ol{\state}) = \lim \limits_{n \ra +\infty} \objective(\solution[\eps_{n}]) = \inf \limits_{n \ra +\infty} \objective(\solution[\eps_{n}]) = \optVal,
    \]
    since \((\objective(\solution[\eps_{n}]))_{n \in \N}\) is a monotone sequence. This indicates that \(\ol{\state} \in \argmin \CSIP\).


    On the one hand, by continuity of \(\regularizer\) and from Lemma \ref{lem:minimizing sequence},
    \[
        \regularizer(\ol{\state}) = \lim \limits_{n \ra +\infty} \regularizer(\solution[\eps_{n}]) = \sup \limits_{n \ra +\infty} \regularizer(\solution[\eps_{n}]) \le \inf \limits_{\state \in \argmin \CSIP} \regularizer(\state) ,
    \]
    while on the other hand, since \(\ol{\state} \in \argmin \CSIP\),
    \[
        \regularizer(\ol{\state}) \ge \inf \limits_{\state \in \argmin \CSIP} \regularizer(\state).
    \]

    Thus \(\ol{\state}\) minimizes \(\regularizer\) on \(\argmin \CSIP\). This indicates that the cluster points of \(\set{\solution[\eps]}_{\eps>0}\) solve the minimization problem
    \[
    \minimize_{\state \in \argmin \CSIP} \regularizer(\state).
    \]
    Since there exists a unique minimizer \(\optSol\) by virtue of strict convexity of \(\regularizer\) and convexity of \(\argmin \CSIP\), the cluster point is unique, say \(\optSol\).

    Since the sublevel sets \(\set{\state \in \admStates \suchthat \objective(\state) \le \alpha}\) for \(\alpha \in \R\) are bounded sets by assumption, the family \(\set{\solution[\eps]}_{\eps>0}\) is bounded. Since every subsequential limit of the family \(\set{\solution[\eps]}_{\eps>0}\) is \(\optSol\), the family \(\set{\solution[\eps]}_{\eps>0}\) itself converges to \(\optSol\).
\end{proof}

\subsubsection{Extraction of optimizers}
\label{s:extraction}

Let us consider the following more general version of \eqref{eq:convex SIP}:
\begin{equation}
\label{eq:modified convex SIP}
    \begin{aligned}
        & \minimize && \objective(\state) + \eps \regularizer(\state) \\
        & \sbjto && \begin{cases}
            \constraintMap(\state, \conInp) \le 0 \quad \text{for } \conInp \in \admControls,\\
            \state \in \admStates,
    \end{cases}
    \end{aligned}
\end{equation}
where in addition to the data \ref{a:closed admStates} -- \ref{a:compact constraint index}, we impose
\begin{enumerate}[label=\textup{(\ref{eq:convex SIP}-\roman*)}, align=left, leftmargin=*, widest=vi, start=6]
    \item the map \(\regularizer: \admStates \ra \R\) is chosen to be a positive, strictly convex function and \(\eps > 0\).
\end{enumerate}

\begin{proposition}
    \label{prop:extraction of SIP solution}
    Suppose the objective function \(\objective\) is convex, continuous and has bounded sublevel sets. Then the MSA algorithm extracts a minimizing sequence by solving \eqref{eq:modified convex SIP} for \(\eps \downarrow 0\) and the sequence converges to a solution to \eqref{eq:convex SIP}.
\end{proposition}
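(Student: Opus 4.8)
The plan is to reduce the perturbed semi-infinite program \eqref{eq:modified convex SIP} to the abstract regularization set-up of \secref{s:convex opt} and then to read off the conclusion from Lemma \ref{lem:minimizing sequence} and Proposition \ref{prop:characterization of recovered solution}. First I would check that, for each fixed \(\eps > 0\), the generalized MSA algorithm of \secref{s:generalization} applies to \eqref{eq:modified convex SIP}. The feasible set of \eqref{eq:modified convex SIP} coincides with the feasible set \(\feasibleStates\) of \eqref{eq:convex SIP}, so \ref{a:closed admStates}, \ref{a:Slater condition}, \ref{a:constraint map} and \ref{a:compact constraint index} hold unchanged; the perturbed objective \(\state \mapsto \objective(\state) + \eps\regularizer(\state)\) is convex (hence quasi-convex), continuous (hence upper semi-continuous), and — because \(\regularizer > 0\) forces \(\set{\state \in \admStates \suchthat \objective(\state) + \eps\regularizer(\state) \le \alpha} \subseteq \set{\state \in \admStates \suchthat \objective(\state) \le \alpha}\) — has bounded sublevel sets, so \ref{a:objective} holds as well. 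By the discussion in \secref{s:generalization}, the MSA algorithm therefore solves the global optimization problem \eqref{eq:equivalent problem} built from the perturbed objective and returns the optimal value of \eqref{eq:modified convex SIP}.

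The key additional point I would establish is that the MSA algorithm returns the \emph{true} minimizer of \eqref{eq:modified convex SIP}, not merely a point of the associated relaxed solution set. For this, note that \(\objective + \eps\regularizer\) is strictly convex, being the sum of a convex function and a strictly convex one; hence each of the relaxed finitely-constrained programs in \eqref{eq:finite constraint problem} built from this objective has at most one minimizer, which is exactly the sufficient condition in Remark \ref{rem:sufficient condition} guaranteeing that the optimizer produced by the MSA algorithm is the unique minimizer of \eqref{eq:modified convex SIP}; call it \(\solution[\eps]\). At this stage \eqref{eq:modified convex SIP} is literally the perturbed convex program \eqref{eq:perturbed convex problem} over the closed convex set \(\feasibleStates\), and \eqref{eq:convex SIP} is \eqref{eq:convex problem}, so all the standing hypotheses of \secref{s:convex opt} are in force, with the extra ingredient that \(\objective\) has bounded sublevel sets. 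Invoking Lemma \ref{lem:minimizing sequence} gives that \(\objective(\solution[\eps])\) is non-increasing in \(\eps\) with \(\inf_{\eps > 0}\objective(\solution[\eps]) = \optVal\), so \(\set{\solution[\eps]}_{\eps > 0}\) is a minimizing family for \eqref{eq:convex SIP}; invoking Proposition \ref{prop:characterization of recovered solution} (whose boundedness requirement is supplied by the bounded sublevel sets of \(\objective\)) gives that \(\solution[\eps] \to \optSol\) as \(\eps \downarrow 0\), where \(\optSol\) is the unique minimizer of \(\regularizer\) over \(\argmin \CSIP\); in particular \(\optSol \in \argmin \CSIP\), i.e.\ \(\optSol\) solves \eqref{eq:convex SIP}. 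Running the MSA algorithm on \eqref{eq:modified convex SIP} along any sequence \(\eps_n \downarrow 0\) thus produces \((\solution[\eps_n])_{n}\), a minimizing sequence for \eqref{eq:convex SIP} that converges to the solution \(\optSol\), which is the assertion.

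I expect the main obstacle to be precisely the uniqueness issue flagged in Remark \ref{rem:sufficient condition}: since the MSA algorithm is built to match \emph{values} of convex SIPs, the burden is to argue that \emph{every} relaxed program appearing in the evaluation of \(\relObjective\) has a unique minimizer — not just the semi-infinite program itself — which is why the perturbation \(\regularizer\) is chosen strictly convex rather than merely strictly quasi-convex, and why perturbing the constraint instead (in the spirit of the pathology of \secref{sec:solid disk}) would not suffice. Once that is in place, the remainder is routine bookkeeping on top of Lemma \ref{lem:minimizing sequence} and Proposition \ref{prop:characterization of recovered solution}, together with the elementary facts that sublevel-set boundedness and positivity of \(\regularizer\) pass from \(\objective\) to \(\objective + \eps\regularizer\).
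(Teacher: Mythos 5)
Your proposal is correct and follows essentially the same route as the paper's proof: strict convexity of \(\objective + \eps\regularizer\) forces uniqueness of minimizers of the relaxed finitely-constrained programs (the paper cites \cite[Proposition 2]{ref:DasAraCheCha-22} where you invoke Remark \ref{rem:sufficient condition}, but these rest on the same uniqueness observation), so the MSA algorithm returns the true optimizer \(\solution[\eps]\) of \eqref{eq:modified convex SIP}, and then Lemma \ref{lem:minimizing sequence} and Proposition \ref{prop:characterization of recovered solution} deliver the minimizing sequence and its convergence. The only cosmetic difference is that the paper makes the attainment of the optimal value explicit by replacing \(\admStates\) with a compact sublevel set of \(\objective\), whereas you handle the same point through the boundedness of sublevel sets inside Proposition \ref{prop:characterization of recovered solution}.
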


Proposition \ref{prop:characterization of recovered solution} enables us to construct a minimizing sequence comprised of solutions of \eqref{eq:modified convex SIP}, with \(\eps \downarrow 0\), that converges to an optimizer of \eqref{eq:convex SIP}.
\begin{proof}
    Since the objective function \(\objective\) is continuous and has bounded sublevel sets, the constraint set \(\admStates\) in \eqref{eq:convex SIP} can be replaced with the compact set \(\tilde{\admStates} \Let \set[\big]{\state \in \admStates \suchthat \objective(\state) \le \objective(\tilde{\state} + 1)}\). This guarantees that the optimal value of \eqref{eq:convex SIP} is finite and is attained. By the choice of a positive \(\regularizer\), the same argument holds true for \eqref{eq:modified convex SIP}.

    For \(\eps>0\), the function \(\objective + \eps \regularizer\) is, by construction, strictly convex and hence the convex SIP \eqref{eq:modified convex SIP} has a unique solution. In view of \cite[Proposition 2]{ref:DasAraCheCha-22}, we know that the optimizer of \eqref{eq:modified convex SIP} is obtained by the MSA algorithm, that is by solving the minimization problem in \eqref{eq:finite constraint problem} corresponding to the objective in \eqref{eq:modified convex SIP}.

    Proposition \ref{prop:characterization of recovered solution} ensures that for any sequence \(\eps \downarrow 0\), the family of solutions \(\solution[\eps]\), obtained by solving \eqref{eq:modified convex SIP} using MSA algorithm, is a minimizing sequence of the objective function \(\objective\) that converges to an optimizer of \eqref{eq:convex SIP}. 
\end{proof}

\begin{remark}
    Proposition \ref{prop:extraction of SIP solution} is applicable even in the case when the objective \(\objective\) is linear if the domain of interest \(\admStates\) is restricted to be compact.
\end{remark}


\section{Numerical experiments}
\label{sec:simulations}
This section is devoted to the illustration of the extended MSA algorithm based on Proposition \ref{prop:extraction of SIP solution}. Standard optimization routines from the \textsf{SciPy} library and standard solvers from \textsf{Mathematica 12.1} have been employed in solving the examples provided in this section. We begin with a simple example of linear optimization on a solid disk; this problem can be readily solved using quadratic solvers, but for illustration purposes we reformulate it as a convex SIP.

\subsection{Optimization on a solid disk}
\label{sec:solid disk}

Consider the following optimization problem:
\begin{equation}
    \label{eq:circle optimization}
    \begin{aligned}
        & \minimize_{\xpos, \ypos} && \ypos \\
        & \sbjto && \begin{cases}
            \xpos^{2} + \ypos^{2} \le 9, \\
            -4 \le \xpos \le 4,\\
            -4 \le \ypos \le 4.
        \end{cases}
    \end{aligned}
\end{equation}
It is easy to check that the optimal value of \eqref{eq:circle optimization} is \(-3\) and is attained at \(\pmat{\xpos \opt, \ypos \opt} = \pmat{0, -3}\). The above problem can be recast into a convex SIP with linear constraints:
\begin{equation}
    \label{eq:circle SIP}
    \begin{aligned}
        & \minimize_{\xpos, \ypos} && \ypos \\
        & \sbjto && \begin{cases}
            \xpos \cos(\theta) + \ypos \sin(\theta) \le 3 \quad \text{for all } \theta \in \lcrc{0}{2\pi},\\
            -4 \le \xpos \le 4,\\
            -4 \le \ypos \le 4.
        \end{cases}
    \end{aligned}
\end{equation}
When employed directly, the MSA algorithm selects points on the line \(\ypos = -3\), which is indicated by the green line in Figure \ref{fig:circle example}.

\begin{figure}[h!]
    \centering
    \begin{subfigure}{0.45\textwidth}
        \centering
        \includegraphics[scale=0.5]{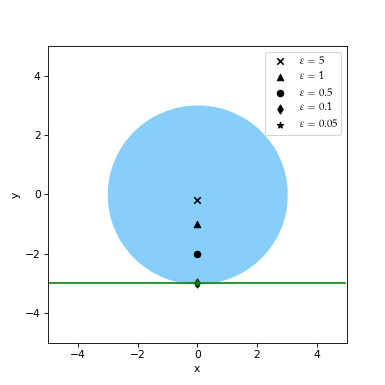}
        \subcaption{Solutions of \eqref{eq:circle SIP} regularized by \(\regularizer_{\textrm{a}}\)}
    \label{fig:circle optimization (a)}
    \end{subfigure} \qquad
    \begin{subfigure}{0.45\textwidth}
        \centering
        \includegraphics[scale=0.5]{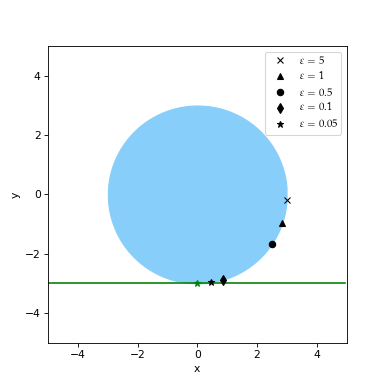}
        \subcaption{Solutions of \eqref{eq:circle SIP} regularized by \(\regularizer_{\textrm{b}}\)}
    \label{fig:circle optimization (b)}
    \end{subfigure}
	\caption{Solutions of \eqref{eq:circle SIP} extracted via regularization.}
	\label{fig:circle example}
\end{figure}

Figure \ref{fig:circle optimization (a)} shows a sequence of solutions corresponding to decreasing values of \(\eps\), of \eqref{eq:circle SIP} with the perturbation function
\[
	\R^2\ni (\xpos, \ypos)\mapsto \regularizer_{\textrm{a}}(\xpos, \ypos) \Let \tfrac{1}{2} \bigl(\xpos^{2} + \ypos^{2}\bigr),
\]
and Figure \ref{fig:circle optimization (b)} shows the corresponding sequence of solutions with the perturbation function
\[
	\R^2\ni (\xpos, \ypos)\mapsto \regularizer_{\textrm{b}}(\xpos, \ypos) \Let \tfrac{1}{2} \bigl((\xpos - 1)^{2} + \ypos^{2}\bigr).
\]

Notice that the sequence of solutions obtained is dependent on the choice of perturbation \(\regularizer\) and as described in Proposition \ref{prop:characterization of recovered solution}, the sequences may converge to different optimizers of the original SIP. However in this example, since the SIP \eqref{eq:circle SIP} has a unique solution at \(\pmat{0, -3}\), the sequences converge to it regardless of the choice of the perturbation. 

\begin{remark}
	Note that although the original problem \eqref{eq:circle SIP} has a unique optimizer, the relaxed problems, being linear programs, may not exhibit uniqueness of minimizers. This necessitates the use of regularization to extract the optimizer of \eqref{eq:circle SIP}
\end{remark}

The finitely constrained inner optimization problem was solved using \textsf{SLSQP} method in the \textsf{SciPy} library by providing the initial guess \((1, 0)\). The global optimization was solved using \texttt{dual\_annealing} method in the \textsf{SciPy} library coupled with other default parameters of the routine.

\subsection{Chebyshev centers under the \(\ell_{1}\) norm}

Let \(\R^{2}\) be equipped with the \(\ell_{1}\)-norm
\[
    \R^{2} \ni (\xpos, \ypos) \mapsto \norm{(\xpos, \ypos)}_{\ell_{1}} \Let \abs{\xpos} + \abs{\ypos} \in \R.
\]
Consider the set \(\relSet \subset \R^{2}\) defined by
\begin{align*}
	\relSet = \set[\bigg]{(\xpos, \ypos) \in \R^{2} \suchthat \begin{aligned}
				& \xpos + 2 \ypos \le 2,\;  -\xpos + 2 \ypos \le 2, \\
				& -\xpos + 4 \ypos \ge -2,\; \xpos \ge -2.
	\end{aligned}}.
\end{align*}
(This set \(\relSet\) was introduced in \secref{sec:MSA algo}.) We are interested in finding a Chebyshev center of \(\relSet\) in \((\R^{2}, \norm{\cdot}_{\ell_{1}})\).  Since \(\ell_{1}\)-norm is not strictly convex, the Chebyshev center of a set cannot be obtained directly from the MSA algorithm but the approach via regularization can be employed, and to this end we pick the perturbation function
\[
	\R^{3} \ni (\slack, \xpos, \ypos) \mapsto \regularizer(\slack, \xpos, \ypos) = \tfrac{1}{2} \bigl((\slack - 2)^{2} + (\xpos - 2)^{2} + (\ypos - 2)^{2}\bigr).
\]

\begin{figure}[h!]
    \centering
    \setlength\figureheight{=0.3\textwidth}
    \setlength\figurewidth{=0.45\textwidth}
        \input{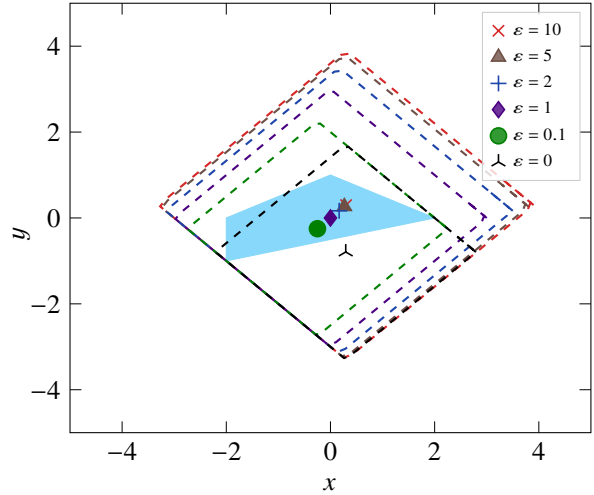}
	\caption{Sequence of solutions converging to a Chebyshev ball of \(\relSet\).}
	\label{fig:chebyshev example}
\end{figure}
Figure \ref{fig:chebyshev example} shows the \(\ell_{1}\)-balls obtained as solutions to the perturbed optimization problem for various \(\eps\). Observe that the radii of the candidate Chebyshev balls approach the radius \(2.5\) of the ball corresponding to \(\eps = 0\), which is the Chebyshev radius of \(\relSet\). But note that the ball corresponding to \(\eps = 0\) does not encompass all the points in \(\relSet\). The sequence of balls obtained via regularization satisfy all the constraints and their centers converge to the point \((-0.25, -0.25)\). Thus \((-0.25, -0.25)\) is a Chebyshev center of the set \(\relSet\).

It is important to note that solution picked by the MSA algorithm (without perturbation) may very well be a Chebyshev center of \(\relSet\) but this cannot be guaranteed in general. The approach of regularization guarantees that the sequence of solutions lies in the feasible set and hence also the limit.

\begin{remark}
    The global optimization routine plays a crucial role in this algorithm since its convergence to a global optimizer is imperative to establish the equivalence between the resulting finite convex minimization problem and the original SIP, and for the extraction of both the optimal value and an optimizer of the SIP.
\end{remark}

The inner optimization was solved using \textsf{SLSQP} algorithm of \textsf{SciPy} library and the global optimization problem was solved using the \texttt{differential\_evolution} method \cite{ref:Storn-97} of the SciPy library with the `\texttt{randtobest1exp}' option for the strategy parameter.

\subsection{A case of non-convex \(\relSet\)}
\label{s:non-convex}

	Here is a relatively simple example of solving the Chebyshev center problem for the non-convex region
	\[
		\relSet \Let \set[\big]{(x_1, x_2)\in\lcrc{0}{1}^2 \suchthat x_1^2 + x_2^2 \ge \tfrac{1}{3} \text{ and }(x_1-1)^2 + x_2^2 \ge \tfrac{2}{3}}
	\]
	relative to the standard Euclidean norm on \(\R^2\). The numerical calculations corresponding to the MSA algorithm were carried out in \textsf{Mathematica 12.1} using its native \texttt{NelderMead} technique in the global optimization routine \texttt{NMaximize}, and led to the Chebyshev radius \(\chebRadius{\relSet} = 0.633431\) and the Chebyshev center \((0.500000, 0.611112)\).\footnote{For this example we report numerical results correct up to $6$ decimal places.} A pictorial representation of the underlying set \(\relSet\) (shaded in blue) and the Chebyshev ball (shaded in light brown) is shown below:
	\[
		\includegraphics[scale=0.65]{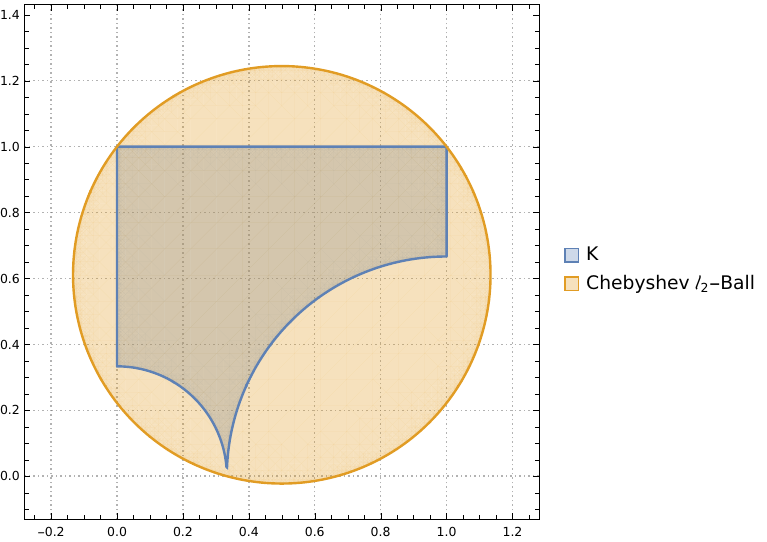}
	\]

\subsection{Examples on function spaces}

Consider an example of data fitting on the space \(\Lp{2}(\lcrc{0}{1}, \R)\). Let the hypothesis class \(\modelClass\) be the space of polynomials with bounded coefficients and the reduced model class be the polynomials with degree below \(\discDim\). The set of polynomials attaining prescribed values at specific datapoints is a restriction of the model class onto an affine space (in the coefficients).

\subsubsection{Simple 1D example}

We begin with the case of the affine space being 1-dimensional. In this scenario, a sample point can be represented by a single parameter, say \(\alpha\), and the induced norm on the affine space becomes equivalent to \(\abs{\cdot}\) in the \(\alpha\)- space. As a consequence, although the Chebyshev radius of \(\relSet\) varies, the Chebyshev center of \(\relSet\) becomes independent of the norm chosen on the norm chosen on the original space.

We consider an example in \(\R^{2}\). Let the affine space described by the data set be the line \(y = 2x + 3\). Suppose that \(\relSet\) is the intersection of the line with the rectangular region described by \(\abs{x} \leq 5, \abs{y} \leq 10\).  Figure \ref{fig:Cheb1d rect} shows the Chebyshev circles of \(\relSet\) under different norms (induced by different inner products) on \(\R^{2}\).

\begin{figure}[h!]
    \centering
    \setlength\figureheight{=0.3\textwidth}
    \setlength\figurewidth{=0.45\textwidth}
        \includegraphics[scale=0.23]{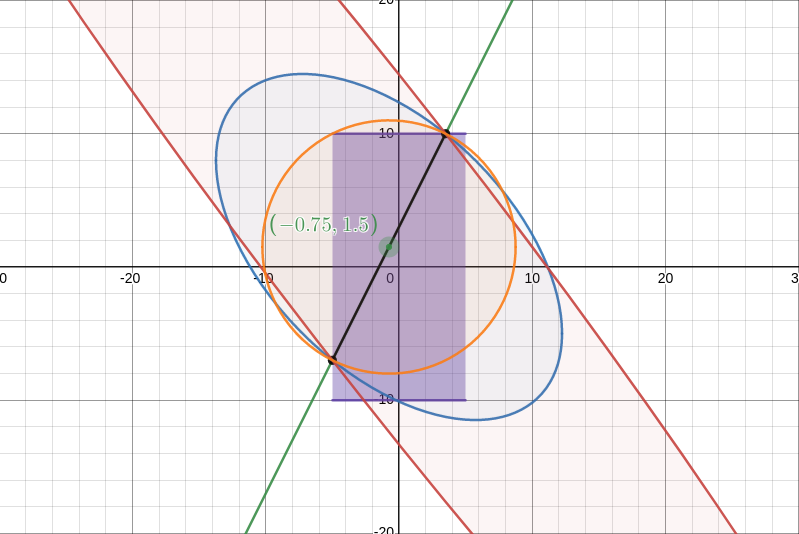}
    \caption{Chebyshev center of \(\relSet\) with rectangular bounding region .}
	\label{fig:Cheb1d rect}
\end{figure}

Figure \ref{fig:Cheb1d ell} shows the Chebyshev balls of \(\relSet\) which is now considered to be the portion of the line \(y = 2x + 3\) inside the elliptical region \(x^2 + 3y^2 \leq 100\).
\begin{figure}[h!]
    \centering
    \setlength\figureheight{=0.3\textwidth}
    \setlength\figurewidth{=0.45\textwidth}
        \includegraphics[scale=0.3]{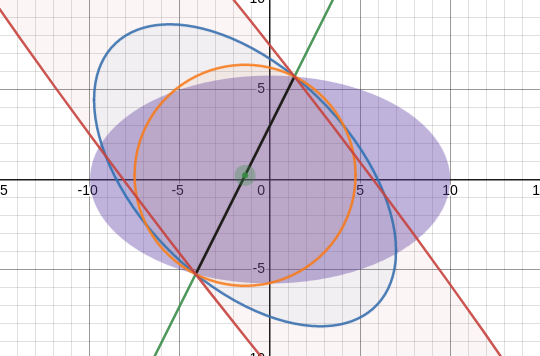}
    \caption{Chebyshev center of \(\relSet\) with elliptical bounding region.}
	\label{fig:Cheb1d ell}
\end{figure}

\subsubsection{Example of affine space in 3D}

Consider an example of a 2 dimensional affine space.

Concretely, let the affine space be the plane with the normal \(\pmat{1 & 0.7 & 0.49}\) passing through the point \(\pmat{1, 1, 1}\). Let \(\relSet\) be the region in the plane inside a cube of radius 15 centered at origin. Parametrising the affine plane using an orthonormal basis and centering at \(\pmat{1, 1, 1}\), the region \(\relSet\) is depicted as the purple shaded region in Figure \ref{fig:Cheb2D}.
\begin{figure}[h!]
    \centering
    \setlength\figureheight{=0.3\textwidth}
    \setlength\figurewidth{=0.45\textwidth}
        \includegraphics[scale=0.5]{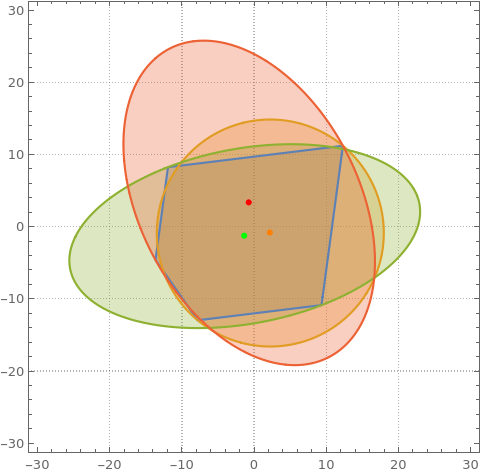}
    \caption{The purple shaded region is the set \(\relSet\) and the orange circle represents the boundary of the Chebyshev ball corresponding to the Euclidean norm.}
	\label{fig:Cheb2D}
\end{figure}

The orange shaded region in Figure \ref{fig:Cheb2D} denotes the Chebyshev ball obtained when the space is equipped with the Euclidean norm.
Observe that, in this case, the Chebyshev center depends on the norm used on the ambient space.

\subsubsection{Higher dimensional space}

We now consider an example in the space of polynomials. Let the search space \(\discSpace \subset \Lp{2} (\lcrc{0}{1}, \R)\) also be the polynomials with degree below \(20\). The data \(\dataSet\) is obtained by sampling the function
\[
    \lcrc{0}{1} \ni x \mapsto f(x) \Let 10 \sin(2 \pi x) \in \R
\]
at \(\sampleSize\) points in the interval \(\lcrc{0}{1}\).

Using the basis \(\set{1, x, x^2, \ldots, x^{\discDim-1}}\), the set of functions in \(\modelClass\) satisfying the data \(\dataSet\) can be seen to be the intersection of an affine space with \(\modelClass\):
\[
    \relSet = \set[\bigg]{\conCoeffs[] \in \R^{\discDim} \suchthat \sum_{j=0}^{\discDim-1} \conCoeffs x_{k}^{j} = f(x_{k}) \quad \text{for k} = 1, \ldots, \sampleSize}
    \cap \set[\big]{\conCoeffs[] \in \R^{\discDim} \suchthat \abs{\conCoeffs} \le 100}
\]

Using the same basis for the search space \(\discSpace\), the relative Chebyshev center of the set \(\relSet\) in \(\discSpace\) can be phrased as the optimizaiton problem:
\[
    \argmin \limits_{\funcCoeffs[] \in \R^{21}} \max \limits_{\conCoeffs[] \in \relSet} \norm{\funcCoeffs[] - \conCoeffs[]}_{\modelClass},
\]
where \(\norm{\cdot}_{\modelClass}\) is the induced norm on \(\discSpace\) identified as \(\R^{\discDim}\).

The effective dimension of the discrete model class is the dimension of the affine space, \(\discDim - \sampleSize\). Since, in normed spaces, the Chebyshev center of an affine space lies in the same affine space, the Chebyshev center of \(\relSet\) also satisfies the prescribed data.

The figures \ref{fig:design 7} -- \ref{fig:design 20} showcase the Chebyshev centers of \(\relSet\) for a series of sampling size \(\sampleSize\) and reduced search dimension \(\discDim\).

\begin{figure}[h!]
    \centering
    \setlength\figureheight{=0.3\textwidth}
    \setlength\figurewidth{=0.45\textwidth}
        \includegraphics[scale=0.5]{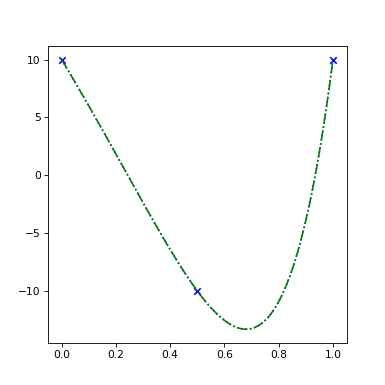}
    \caption{Chebyshev center for \(\sampleSize = 3\) sampling points with \(\discDim = 7\).}
	\label{fig:design 7}
\end{figure}

\begin{figure}[h!]
    \centering
    \setlength\figureheight{=0.3\textwidth}
    \setlength\figurewidth{=0.45\textwidth}
        \includegraphics[scale=0.5]{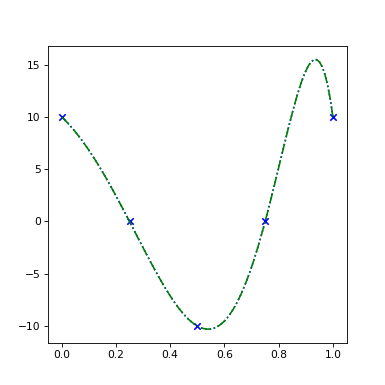}
    \caption{Chebyshev center for \(\sampleSize = 5\) sampling points with \(\discDim = 10\).}
	\label{fig:design 10}
\end{figure}

\begin{figure}[h!]
    \centering
    \setlength\figureheight{=0.3\textwidth}
    \setlength\figurewidth{=0.45\textwidth}
        \includegraphics[scale=0.5]{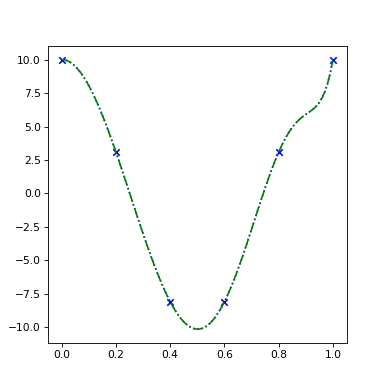}
    \caption{Chebyshev center for \(\sampleSize = 6\) sampling points with \(\discDim = 12\).}
	\label{fig:design 12}
\end{figure}

\begin{figure}[h!]
    \centering
    \setlength\figureheight{=0.3\textwidth}
    \setlength\figurewidth{=0.45\textwidth}
        \includegraphics[scale=0.5]{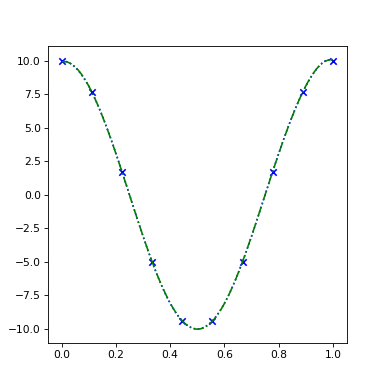}
    \caption{Chebyshev center for \(\sampleSize = 10\) sampling points with \(\discDim = 20\).}
	\label{fig:design 20}
\end{figure}

Figure \ref{fig:design 10q} presents a variation of the case showed in Figure \ref{fig:design 10} wherein the set \(\relSet\) is modified to be the intersection of the affine space satisfying the prescribed data with a shifted bounding box on the coefficients:
\[
    \relSet' = \set[\bigg]{\conCoeffs[] \in \R^{\discDim} \suchthat \sum_{j=0}^{\discDim-1} \conCoeffs x_{k}^{j} = f(x_{k}) \quad \text{for k} = 1, \ldots, \sampleSize} \cap \set[\big]{\conCoeffs[] \in \R^{\discDim} \suchthat 100 \le \conCoeffs \le 200}.
\]
It is observed that the change in the Chebyshev center due to this modification to the set \(\relSet\) is a translation of the coefficients by an amount related to the change in the bounds.
\begin{figure}[h!]
    \centering
    \setlength\figureheight{=0.3\textwidth}
    \setlength\figurewidth{=0.45\textwidth}
        \includegraphics[scale=0.5]{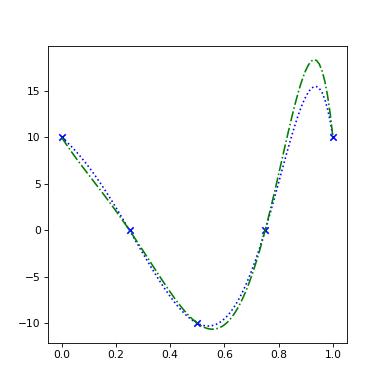}
    \caption{Chebyshev center for \(\sampleSize = 5\) sampling points with \(\discDim = 10\) with shifted bounding box on the coefficients.}
	\label{fig:design 10q}
\end{figure}


\subsection{Critical importance of the global optimization}

The MSA algorithm extracts the Chebyshev center by relying on a global optimization process. The convergence of the global optimization process to an actual global optimum is crucial to finding the actual Chebyshev center of a set. Consequently, the global optimization algorithm and its sampling process should ideally be fine-tuned depending on the application at hand.

Let us illustrate the gaps in the capabilities of off-the-shelf solvers in the context of some of preceding examples:
\begin{itemize}[label=\(\circ\), leftmargin=*]
	\item The Chebyshev triplet obtained for the non-convex set \(\relSet\) in \secref{s:non-convex} in \textsf{Mathematica 12.1} using its native \texttt{SimulatedAnnealing} technique in the global optimization routine \texttt{NMaximize} led to the Chebyshev radius \(\chebRadius{\relSet} = 0.600925\) and the Chebyshev center \((0.499999, 0.666666)\) (correct up to \(6\) decimal places). A pictorial representation of the underlying set \(\relSet\) (shaded in blue) and this particular Chebyshev ball (shaded in light brown) is shown below for comparison against the figure reported in \secref{s:non-convex}.
		\[
			\includegraphics[scale=0.65]{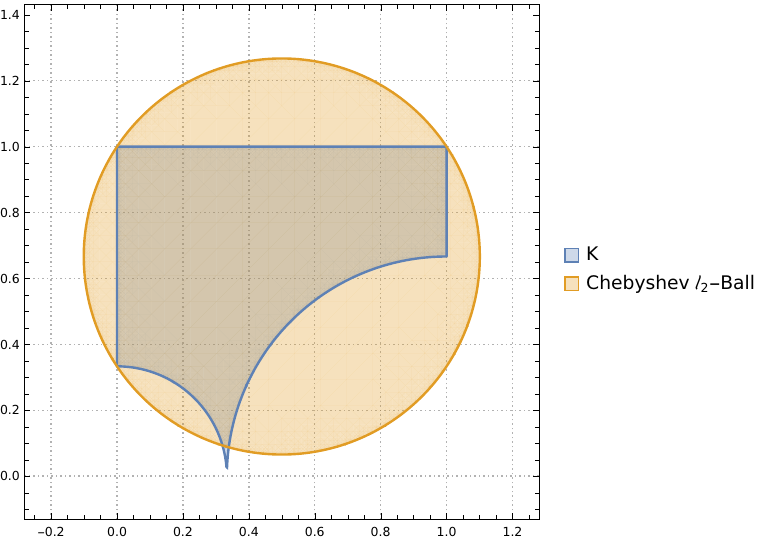}
		\]
		This difference between the two outputs is perhaps due to the difficulty faced by the native \texttt{SimulatedAnnealing} routine to sample from the pinched (which is nearly singular relative to the Lebesgue measure) region of \(\relSet\) close to \((\frac{1}{3}, 0)\), in contrast to the simplex-based deterministic \texttt{NelderMead} routine which led to the correct Chebyshev triplet in \secref{s:non-convex}.
    \item Consider the problem of finding the Chebyshev ball for the equilateral triangle \(\relSet = \set{(x, y) \in \R^{2} \suchthat  \sqrt{3} x + y \le \sqrt{3}, -\sqrt{3} x + y \le \sqrt{3}, y \ge 0}\) under the weighted norm \(\norm{v}_{M} \Let \sqrt{\inprod{v}{M v}}\) with weighting matrix
        \[
            M = \pmat{4.01933 & -2.038 \\ -2.038 & 14.6273}.
        \]
        The \texttt{NelderMead} routine in \textsf{Mathematica 12.1} converges to a suboptimal Chebyshev radius of \(3.706789\); the corresponding Chebyshev ball is shown as the green shaded region in the following figure. The correct Chebyshev radius obtained by including the vertices of the triangle in the constraints is \(3.709497\) and is shaded in orange in the figure below.
        \[
            \includegraphics[scale=0.65]{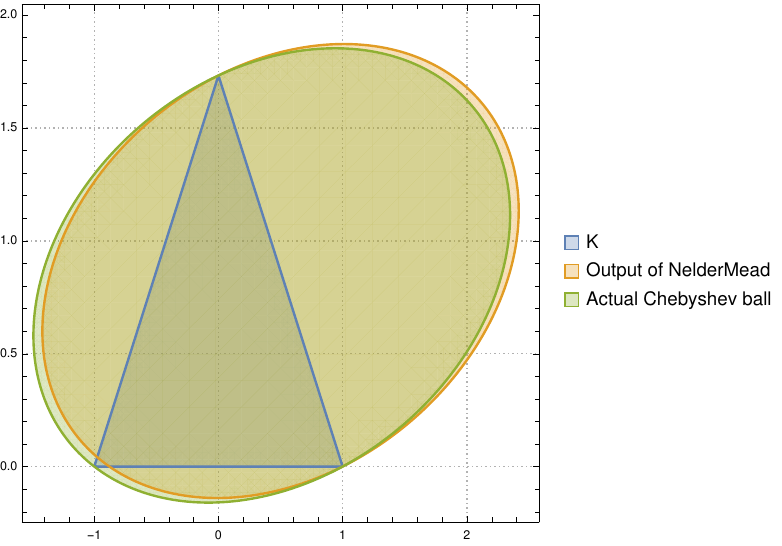}
        \]
\end{itemize}
These two illustrative examples highlight the need, in general, for careful context-dependent tuning of the global optimization algorithm in the context of the MSA algorithm.

\providecommand{\bysame}{\leavevmode\hbox to3em{\hrulefill}\thinspace}
\providecommand{\MR}{\relax\ifhmode\unskip\space\fi MR }
\providecommand{\MRhref}[2]{%
  \href{http://www.ams.org/mathscinet-getitem?mr=#1}{#2}
}
\providecommand{\href}[2]{#2}

\end{document}